\theoremstyle{plain}
\newtheorem{thm}{Theorem}[section]
\newtheorem{cor}[thm]{Corollary}
\newtheorem{lem}[thm]{Lemma}
\newtheorem{prop}[thm]{Proposition}
\newtheorem{ques}[thm]{Question}
\newtheorem{conj}[thm]{Conjecture}
\newtheorem{exam}[thm]{Example}
\newtheorem*{acknow}{Acknowledgement}
\def\cal{\mathcal}
\def\bbb{\mathbb}
\def\op{\operatorname}
\renewcommand{\phi}{\varphi}
\newcommand{\N}{\bbb{N}}
\newcommand{\Z}{\bbb{Z}}
\newcommand{\Q}{\bbb{Q}}
\begin{document}

\title[Variations on twists of curves]{Variations on twists of tuples of hyperelliptic curves and related results}
\author{ Tomasz J\k{e}drzejak and Maciej Ulas}

\keywords{twist of a curve, hyperelliptic curve, superelliptic curve, rank, torsion part, Jacobian} \subjclass[2000]{11G05}
\thanks{Research of the second author was supported by Polish Government funds for science, grant IP 2011 057671 for the years 2012--2013.}

\begin{abstract}
Let $f\in\Q[x]$ be a square-free polynomial of degree $\geq 3$ and $m\geq 3$ be an odd positive integer. Based on our earlier investigations we prove that there exists a function $D_{1}\in\Q(u,v,w)$ such that the Jacobians of the curves
\begin{equation*}
C_{1}:\;D_{1}y^2=f(x),\quad C_{2}:\;y^2=D_{1}x^m+b,\quad C_{3}:\;y^2=D_{1}x^m+c,
\end{equation*}
have all positive ranks over $\Q(u,v,w)$. Similarly, we prove that there exists a function $D_{2}\in\Q(u,v,w)$ such that the Jacobians of the curves
\begin{equation*}
C_{1}:\;D_{2}y^2=h(x),\quad C_{2}:\;y^2=D_{2}x^m+b,\quad C_{3}:\;y^2=x^m+cD_{2},
\end{equation*}
have all positive ranks over $\Q(u,v,w)$. Moreover, if $f(x)=x^m+a$ for some $a\in\Z\setminus\{0\}$, we prove the existence of a function $D_{3}\in\Q(u,v,w)$ such that the Jacobians of the curves
\begin{equation*}
C_{1}:\;y^2=D_{3}x^{m}+a,\quad C_{2}:\;y^2=D_{3}x^m+b,\quad C_{3}:\;y^2=x^m+cD_{3},
\end{equation*}
have all positive ranks over $\Q(u,v,w)$. We present also some applications of these results.

Finally, we present some results concerning the torsion parts of the Jacobians of the superelliptic curves $y^p=x^{m}(x+a)$ and $y^p=x^{m}(a-x)^{k}$ for a prime $p$ and $0<m<p-2$ and $k<p$ and apply our result in order to prove the existence of a function $D\in\Q(u,v,w,t)$ such that the Jacobians of the curves \begin{equation*}
C_{1}:\;Dy^p=x^m(x+a),\quad Dy^p=x^m(x+b)
\end{equation*}
have both positive rank over $\Q(u,v,w,t)$.

\end{abstract}

\maketitle

\section{Introduction}\label{Section1}
In a recent paper \cite{JeToUl} we considered some problems related to the existence of simultaneous twists of triples and quadruplets
 of hyperelliptic curves defined by the equation $y^2=x^n+a$. More precisely, in the cited paper it is proved that for any given nonzero rational
 numbers $a, b, c$, there exists
a polynomial $d(t)\in \Q[t]$ such that the Jacobians of the curves given by $y^2 = x^n + ad(t),  y^2 = x^n + bd(t)$ and $y^2 = x^n + cd(t)$ all have
positive
rank over $\Q(t)$. In the case of odd $n$ it is possible to extend this result to four curves of the considered form. In \cite{JU} we also considered
the octic twists of the hyperelliptic curves defined by the equation $y^2=x^5+Ax$ and proved similar result.
The questions leading to this results were motivated by the work of the second author concerned with the existence of simultaneous twists
(of the same type) for tuples of elliptic curves \cite{Ul1, Ul2}.
In a recent paper \cite{Ul3} the second author considered some variations on this topic and asked about the existence of simultaneous twists
not necessarily of the same type of triples of elliptic curves.
In the cited paper, among other things it is proved that if $E_{1}:\;y^2=f(x)$ is an elliptic curve and $E_{2}, E_{3}$ are
elliptic curves with $j$-invariant 0, then there exists a rational function $D_{2,3,3}\in\Q(u,v,w)$ such that the quadratic twist of
the curve $E_{1}$ and the cubic twists of the curves $E_{2}, E_{3}$ by the $D_{2,3,3}$ have positive rank over $\Q(u,v,w)$.
 Moreover, it is proved the existence of a rational function $D_{2,6,6}\in\Q(u,v,w)$ such that the quadratic twist of the curve $E_{1}$
 and the sextic twists of the curves $E_{2}, E_{3}$ by the $D_{2,6,6}$ have positive rank over $\Q(u,v,w)$.

The aim of the present paper is to generalize the results from the paper \cite{Ul3} to the case of twists of triples of hyperelliptic curves.
We also present some results concerning twists of certain superelliptic curves.

We consider the hyperelliptic curves
\begin{equation}\label{hypcurves}
C_{1}:\;y^2=f(x),\quad C_{2}:\;y^2=x^m+b,\quad C_{3}:\;y^2=x^m+c,
\end{equation}
where $f\in\Z[x]$ is a square-free polynomial with $\op{deg}f\geq 3$, $m=2n+1\geq 3$ and $b, c\in\Z\setminus\{0\}$.
From our assumption on $f$ and $m$ we get that the genus of $C_{1}$ is at least 1 and the genus of $C_{i}$ is $n$ for $i=2,3$.
Let us recall that by the $k$-twist, with $k\geq 2$, of a curve $C$ defined over $\Q$ we understand a curve $C'$ such that $C\simeq C'$
over finite extension $K/\Q$ of degree $k$. For general hyperelliptic curve $C_{1}$,
there exists only quadratic twists, i.e. for any $d\in\Q^{*}\setminus\Q^{*2}$ we have $C_{1}':\;dy^2=f(x)$ and $C_{1}\simeq C_{1}'$ over $\Q(\sqrt{d})$.
However, in case of hyperelliptic curve defined by the equation $C:\;y^2=x^m+a$ with $m$ odd, there exists higher twists.
In particular we have $m$-twist of the curve $C$ defined by the equation $C':\;y^2=dx^m+a$.
In this case $C\simeq C'$ over $\Q(\sqrt[m]{d})$ for any given $d\in\Q$ which in its factorization into primes (with positive and negative exponents)
 contains a prime with the exponent co-prime to $m$. With $d$ chosen in this way we will say in the sequel that $C'$ is a $m$-twist of $C$ by $d$.
 Moreover for given $m$, we have $2m$-twist of the curve $C$ defined by the equation $C':\;y^2=x^m+ad$.
 In this case $C\simeq C'$ over $\Q(\sqrt[2m]{d})$ for any given $d\in\Q$ which in its factorization into primes contains a prime with the
 exponent co-prime to $2m$. With $d$ chosen in this way we will say in the sequel that $C'$ is a $2m$-twist of $C$ by $d$.
 The mentioned twists are natural generalizations of the cubic twists (in the case of $m$-twist) and the  sextic twists (in the case of $2m$-twist) of an
  elliptic curve with $j$-invariant 0. It is also clear that in case of the curve $C$ with composite $m$ it is possible to define other twists.
  However, in this paper we will concentrate only on $m$-twists and $2m$-twists which always exist.
   More information concerning twists of curves can be found in \cite{MT}.

Let us present the content of the paper in some details. In section \ref{Section2} we prove that there exists a rational function $D\in\Q(u,v,w)$
such that the Jacobians of the simultaneous quadratic twist of the curve $C_{1}$ and the $m$-twists of the curves $C_{2}, C_{3}$ by $D$
have positive rank over $\Q(u,v,w)$. Using this result we prove that there exists a $D'\in\Q(u,v,w)$ such that the Jacobian of the quadratic twist
of $C_{1}$ by $D'$ has positive rank over $\Q(u,v,w)$ and the Jacobian of $m$-twist of $C_{2}$ has rank $\geq 2$ over $\Q(u,v,w)$.
Similar result is proved in section \ref{Section3} in case of Jacobians of simultaneous quadratic twist of $C_{1}$ and the $2m$-twists of $C_{2}, C_{3}$.
In particular, we prove that there is a function $D'\in\Q(u,v,w)$ such that the Jacobian of quadratic twist of the curve $C_{1}$ by $D'$ has positive
rank over $\Q(u,v,w)$ and the Jacobian of $2m$-twists of the curve $C_{2}$ has rank $\geq 2$ over $\Q(u,v,w)$.

In section \ref{Section4} we put $f(x)=x^m+a$ with $a\in\Q\setminus\{0\}$ and consider the curve $C_{1}:\;y^2=x^m+a$.
We prove that there exists a rational function $D\in\Q(u,v,w)$ such that the Jacobians of the simultaneous $m$-twists of the curves $C_{1}, C_{2}$
and the $2m$-twist of the curve $C_{3}$ by $D$ have positive rank over $\Q(u,v,w)$. Section \ref{Section5} is devoted to the study of
superelliptic curves defined by the equation $C_{m,a}:\;y^p=x^m(x+a)$, where $p$ is a prime number and $0<m<p$.
We observe that for any given $d\in\Q$ which is not a $p$-th power we can consider $p$-twist of $C_{m,a}$ given by the equation
 $C_{m,a,d}:\;dy^p=x^m(x+a)$. First we give precise information concerning the torsion part of the Jacobian of the curve $C_{m,a}$.
 This information is used in the proof of the existence of the rational function $D\in\Q(u,v,w,t)$ such that the Jacobians of $p$-twists
 of $C_{m,a}$ and $C_{m,b}$ by $D$ have positive rank over $\Q(u,v,w,t)$.

\section{Quadratic twist and two $m$-twists}\label{Section2}

The aim of this section is to prove the following result.

\begin{thm}\label{twist2mm}
Let $f\in\Q[x]$ be given and let us suppose that $f$ has no multiple roots. Let $m, b, c\in\Z\setminus\{0\}$, where $m$ is an odd positive integer and consider the hyperelliptic curves given by {\rm(\ref{hypcurves})}. Then there exists a rational function
$D_{2,m,m}\in\Q(u,v,w)$ such that the Jacobian of quadratic twist of the curve
$C_{1}$ and the Jacobians of $m$-twists of the curves $C_{2}, C_{3}$ by $D_{2,m,m}(u,v,w)$ have positive rank over the field $\Q(u,v,w)$.
\end{thm}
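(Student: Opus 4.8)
The plan is to exhibit a single rational function $D=D_{2,m,m}(u,v,w)$ together with an explicit rational point on each of the three twisted curves
\[
C_1':\ Dy^2=f(x),\qquad C_2':\ y^2=Dx^m+b,\qquad C_3':\ y^2=Dx^m+c,
\]
and then to prove that these points have infinite order in the corresponding Jacobians. Write $m=2n+1$. I would construct $D$ in stages, first arranging a point with a common abscissa on $C_2'$ and $C_3'$, and only afterwards fitting a point on $C_1'$, where the oddness of $m$ will be exploited.

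For $C_2'$ and $C_3'$ I look for points $(s,p)$ and $(s,q)$ sharing the same $x$-coordinate $s$. Subtracting the two defining equations gives $p^2-q^2=b-c$, a conic that I parametrise rationally by a variable $v$, for instance $p=\tfrac{v^2+b-c}{2v}$ and $q=\tfrac{v^2-b+c}{2v}$. Setting $P(v):=p^2-b=\tfrac{v^4-2(b+c)v^2+(b-c)^2}{4v^2}$, both equations reduce to $Ds^m=P(v)$, so every choice of $s$ forces $D=P(v)/s^m$. To put a point $(u,Y)$ on $C_1'$ I need $Y^2=f(u)s^m/P(v)$; since $m$ is odd, $s^m=s\,(s^n)^2$, so it is enough to make $f(u)s/P(v)$ a square. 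I achieve this by taking $s=P(v)w^2/f(u)$ with a third variable $w$, whence $Y=s^n w$. Back-substitution yields the explicit function
\[
D_{2,m,m}(u,v,w)=\frac{P(v)}{s^m}=\frac{f(u)^m}{P(v)^{m-1}\,w^{2m}},
\]
and by construction $C_1'$, $C_2'$, $C_3'$ carry the rational points $(u,s^nw)$, $(s,p)$ and $(s,q)$.

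The crux is to show these points are non-torsion, and here the precise exponents in $D_{2,m,m}$ pay off: each twist becomes constant in two of the three variables. Since $m-1$ and $2m$ are even, $D_{2,m,m}\equiv f(u)$ modulo squares in $\Q(u,v,w)^{*}$, so the quadratic twist $C_1'$ is $\Q(u,v,w)$-isomorphic to the curve $f(u)y^2=f(x)$ defined over $\Q(u)$, and one checks the point $(u,s^nw)$ is sent to $(u,1)$. Likewise the substitution $x=\tfrac{w^2}{f(u)}x'$ followed by the scaling $x'\mapsto P(v)x'$ identifies $C_2'$ and $C_3'$ with $y^2=P(v)x^m+b$ and $y^2=P(v)x^m+c$ over $\Q(v)$, carrying the points to $(1,p)$ and $(1,q)$. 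Because a rational variety admits no non-constant map to an abelian variety, base change from $\Q(u)$ or $\Q(v)$ up to $\Q(u,v,w)$ adds no Mordell--Weil rank, so the theorem reduces to three one-parameter assertions: that $(u,1)$ is non-torsion on $f(u)y^2=f(x)$ over $\Q(u)$, and that $(1,p)$, $(1,q)$ are non-torsion on $y^2=P(v)x^m+b$ and $y^2=P(v)x^m+c$ over $\Q(v)$.

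I would settle these by specialisation, which is the step I expect to be the real obstacle. Since torsion specialises to torsion, it suffices to produce, in each family, one rational value of the parameter at which the fibre is smooth and the specialised point has infinite order over $\Q$. The families are non-isotrivial (as $f$ and $P(v)$ are non-constant modulo squares, respectively modulo $m$-th powers) and the marked points have non-constant coordinates, so a suitable value exists; the genuine difficulty is to make this choice uniform in the data $f,b,c,m$, and it is here that I would invoke the specialisation estimates of \cite{JeToUl}.
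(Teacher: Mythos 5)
Your construction is correct and, at bottom, the same as the paper's: the authors also reduce the system
\begin{equation*}
\frac{f(x_{1})}{y_{1}^2}=\frac{y_{2}^2-b}{x_{2}^m}=\frac{y_{3}^2-c}{x_{3}^m}
\end{equation*}
to a genus-zero conic in $(p,q)$ (theirs is $v^{2m}(p^2-b)=q^2-c$, yours is $p^2-q^2=b-c$), parametrize it, and obtain $D=f(u)T^{m-1}$, which, exactly like your $D=f(u)^m/\bigl(P(v)^{m-1}w^{2m}\bigr)$, is congruent to $f(u)$ modulo squares in $\Q(u,v,w)^{*}$. Your intermediate reduction is also sound and is in fact cleaner than what the paper writes out: since a rational variety admits no nonconstant map to an abelian variety, the Mordell--Weil groups over $\Q(u,v,w)$ of the Jacobians of $f(u)y^2=f(x)$ and of $y^2=P(v)x^m+b$, $y^2=P(v)x^m+c$ coincide with those over $\Q(u)$ and $\Q(v)$ respectively, and your computations of the images $(u,1)$, $(1,p)$, $(1,q)$ of the marked points check out.

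The genuine gap is the non-torsion step, which you explicitly leave open, and your proposed route through it would not work. There are no ``specialisation estimates'' in \cite{JeToUl} to invoke: to conclude non-torsion of the generic point by specialisation you would need, \emph{uniformly in} $f,b,c,m$, one explicit rational parameter value at which the specialised point on a specific curve over $\Q$ is provably of infinite order, and no such statement is available (nor could it plausibly be proved curve by curve). What \cite[Proposition 2.1]{JeToUl} actually provides --- and what the paper uses --- is a direct \emph{function-field} criterion requiring no specialisation at all: a nonconstant point $(x(t),y(t))$ with $y\neq 0$ on $y^2=x^m+h(t)$, $h$ nonconstant, yields a divisor $(P)-(\infty)$ of infinite order in the Jacobian. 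This closes your cases (b): over $\Q(v)$ the isomorphism $y^2=P(v)x^m+b\;\cong\;Y^2=X^m+bP(v)^{m-1}$ (valid since $m-1$ is even) carries $(1,p)$ to the nonconstant point $\bigl(P(v),P(v)^{n}p\bigr)$ with nonzero second coordinate, and similarly for $(1,q)$; a further harmless twist by a power of $2v$ clears the denominator of $P(v)^{m-1}$ if one insists on polynomial $h$. For your case (a), specialisation is again unnecessary: the paper's (admittedly terse) argument is that $C_1'$ is a \emph{nonconstant} quadratic twist of the constant curve $C_1$, and torsion sections of such a family are constant --- over $\overline{\Q}(u)$ the Jacobian becomes constant after the quadratic extension by $\sqrt{f(u)}$, all torsion of a constant abelian variety is defined over the constants, whereas the class of $\bigl((u,1)\bigr)-(\infty)$ is visibly nonconstant. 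So the theorem is true and your setup is fine, but as written the proof is incomplete precisely at its crux, and the missing ingredient is the function-field argument, not a specialisation one.
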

\begin{proof}
We use similar idea as in the proof of Theorem 2.1 from \cite{Ul3}. Let $m=2n+1$ with $n\in\N$. In order to find the function we are looking for we need to find solutions of the following system of equations
\begin{equation}\label{2mmsys}
\frac{f(x_{1})}{y_{1}^2}=\frac{y_{2}^2-b}{x_{2}^m}=\frac{y_{3}^2-c}{x_{3}^m}.
\end{equation}
It is clear that we need solutions $x_{i}, y_{i}$ of the system (\ref{2mmsys}) which satisfy the condition
$x_{i}y_{i}f(x_{1})(y_{2}^2-b)(y_{3}^2-c)\neq 0$ for any $i=1,2,3$.

In order to find solutions of the system (\ref{2mmsys}) we make the
following substitutions
\begin{equation}\label{sub1}
  x_{1}=u,\quad  x_{2}=\frac{1}{v^2T},\quad x_{3}=\frac{1}{T^{n}}
  \quad   y_{1}=\frac{1}{T}, \quad y_{2}=p, \quad y_{3}=q,
\end{equation}
where $u,v$ are rational parameters and $p,q,T$ have to be
determined. After some simple manipulations we observe that the system (\ref{2mmsys}) simplifies to
the system
\begin{equation}\label{sys2mms2}
T=\frac{f(u)}{v^{2m}(p^2-b)},\quad v^{2m}(p^2-b)=q^2-c,
\end{equation}
which can be easily solved. Indeed, it is enough to solve only the second equation which represents the curve, say $C$ defined over the field $\Q(v)$, of genus 0 in the plane $(p,q)$ with known $\Q(v)$-rational point at infinity $[p:q:r]=[1:v^m:0]$. The parametrization of $C$ takes the form
\begin{equation}\label{pqsol2mm}
p=\frac{(b+w^2)v^{2m}-c}{2wv^{2m}},\quad q=\frac{(b-w^2)v^{2m}-c}{2wv^{m}},
\end{equation}
where $w$ is a rational parameter. Using the computed value of $p$
we get the expression for $T$ in the following form
\begin{equation}\label{Tsol233}
T=T(u,v,w)=\frac{4w^2v^{2m}f(u)}{v^{4m}w^{4}-2v^{2m}(bv^{2m}+c)w^{2}+(bv^{2m}-c)^2}.
\end{equation}
The value of $D_{2,m,m}$ we are looking for is just the common value of the expressions from the system (\ref{2mmsys}) and takes the form
\begin{equation*}
D_{2,m,m}(u,v,w)=f(u)T(u,v,w)^{m-1}.
\end{equation*}
We observe that the values of $p,q,T$ we have computed allow us to define the point
\begin{equation*}
P_{1}=\left(u, \frac{1}{T(u,v,w)^{n}}\right)
\end{equation*}
which lies on the curve $C_{1}':\;D_{2,m,m}(u,v,w)y^2=f(x)$. The curve $C_{1}'$ is the quadratic twist of the curve $C_{1}$ by
$D_{2,m,m}(u,v,w)$. Moreover, the points
\begin{align*}
&P_{2}=\left(v^{-2}f(u)T(u,v,w)^{m-2},\;f(u)^{n}pT(u,v,w)^{n(m-1)}\right),\\
&P_{3}=\left(f(u)T(u,v,w)^{m-2},\;f(u)^{n}qT(u,v,w)^{n(m-1)}\right),
\end{align*}
where $p,q,$ are given by (\ref{pqsol2mm}), lie on the curves
\begin{equation*}
C_{2}':\;y^2=x^m+bD_{2,m,m}(u,v,w)^{m-1},\quad
C_{3}':\;y^2=x^m+cD_{2,m,m}(u,v,w)^{m-1}.
\end{equation*}
The curve $C_{i}'$ is $m$-twist of the curves $C_{i}$ by $D_{2,m,m}$ respectively.

Let $J_{i}=Jac(C_{i}')$ be a Jacobian of the curve $C_{i}'$. Then the existence of $\Q(u,v,w)$-rational point $P_{i}$ on the curve $C_{i}'$ implies the existence of $\Q(u,v,w)$-rational divisor $D_{i}=(P_{i})-(\infty)$ in the Jacobian $J_{i}$ for $i=1,2,3$. It is clear that $D_{1}$ has infinite order in $J_{1}(\Q(u,v,w))$ which follows from the fact that $C_{i}'$ is a non-constant quadratic twists of a constant curve $C_{1}$.

In order to prove that $D_{i}$ is of infinite order in $J_{i}$ for $i=2,3$ we invoke the result obtained in \cite[Proposition 2.1]{JeToUl} which says that if $P=(x(t),y(t))$ is a non-constant point on the hyperelliptic curve $H:\;y^2=x^m+h(t)$ with $h\in\Q[t]\setminus\Q$ and $y\neq 0$ then the corresponding divisor $(P)-(\infty)$ lying in $Jac(H)$ is of infinite order. Applying this result in our situation we immediately get that $D_{i}$ is of infinite order in $J_{i}(\Q(u,v,w))$ for $i=2, 3$ and our result follows.
\end{proof}

From the above theorem we get the following interesting result.

\begin{cor}\label{2mmcor2}
Let $m, b\in\Z$ and consider the hyperelliptic curves
$C_{1}:\;y^2=f(x),\;C_{2}:\;y^2=x^m+b$. Then there exists a
rational function $D_{2,m}\in\Q(u,v,w)$ such that
the Jacobian of the quadratic twist of $C_{1}$ by $D_{2,m}(u,v,w)$ has positive rank
and the Jacobian of the $m$-twist of $C_{2}$ by $D_{2,m}(u,v,w)$ has rank $\geq 2$
over $\Q(u,v,w)$.
\end{cor}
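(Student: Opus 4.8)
The plan is to derive the corollary from Theorem \ref{twist2mm} by collapsing the two $m$-twisted curves into one. Concretely, I would apply Theorem \ref{twist2mm} with the \emph{same} constant in the second and third curves, i.e. take $c=b$, and set
\[
D_{2,m}(u,v,w):=D_{2,m,m}(u,v,w)\big|_{c=b}=f(u)\,T(u,v,w)^{m-1},
\]
where $T$ is now obtained from (\ref{Tsol233}) by putting $c=b$. With this choice the curves $C_{2}'$ and $C_{3}'$ produced in the proof of Theorem \ref{twist2mm} coincide: both are the single curve $C_{2}':\;y^2=x^m+bD_{2,m}^{m-1}$, which is the $m$-twist of $C_{2}$ by $D_{2,m}$. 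The statement about the quadratic twist of $C_{1}$ is then immediate, being exactly the conclusion of Theorem \ref{twist2mm} for the curve $C_{1}'$.

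The substance of the corollary is the claim that this $m$-twist has rank $\ge 2$. The construction already hands us two rational points on $C_{2}'$, namely $P_{2}$ and $P_{3}$ from the proof of Theorem \ref{twist2mm} (evaluated at $c=b$), and by \cite[Proposition 2.1]{JeToUl} each of the divisors $D_{2}=(P_{2})-(\infty)$ and $D_{3}=(P_{3})-(\infty)$ has infinite order in $J_{2}:=Jac(C_{2}')(\Q(u,v,w))$. Moreover $P_{2}$ and $P_{3}$ are genuinely distinct and are not interchanged by the hyperelliptic involution, since their $x$-coordinates satisfy $x(P_{2})=v^{-2}x(P_{3})$ with $x(P_{3})=f(u)T^{m-2}\neq 0$ and $v^{2}\neq 1$ in $\Q(u,v,w)$. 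It therefore remains only to prove that $D_{2}$ and $D_{3}$ are \emph{independent}, and I expect this to be the main obstacle: the genus of $C_{2}'$ may be large, so a direct computation of the regulator via canonical heights on the Jacobian is impractical, and one must argue by specialization instead.

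To set up the specialization I would first observe that, after putting $c=b$, the curve $C_{2}'$ together with the points $P_{2},P_{3}$ is already defined over the subfield $\Q(f(u),v,w)\subseteq\Q(u,v,w)$: indeed one has $T=f(u)\,g(v,w)$ for the explicit rational function $g$ obtained by setting $c=b$ in (\ref{Tsol233}), and every coordinate of $P_{2},P_{3}$ (as well as $bD_{2,m}^{m-1}$) is a rational function of $f(u),v,w$ alone. Writing $\alpha$ for a transcendental in the role of $f(u)$, it suffices to prove independence over $\Q(\alpha,v,w)$; this descends to $\Q(u,v,w)$ because $J_{2}(\Q(\alpha,v,w))$ injects into $J_{2}(\Q(u,v,w))$ under the embedding $\alpha\mapsto f(u)$. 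Over $\Q(\alpha,v,w)$ I would invoke Silverman's specialization theorem for families of abelian varieties: specializing the three parameters one at a time, the specialization map on the free part of the Mordell--Weil group is injective for all parameter values outside a proper closed subset, so independence of $D_{2},D_{3}$ follows once one exhibits a single numerical triple $(\alpha_{0},v_{0},w_{0})$ for which the specialized points are independent — a finite check carried out by a two-descent or an explicit height-pairing computation on the resulting hyperelliptic Jacobian over $\Q$.

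As a conceptual cross-check of why independence should hold, I would record the following reduction. Specializing $v\to 1$ forces $p^{2}=q^{2}$ in (\ref{pqsol2mm}), whence $\bar P_{2}=\bar P_{3}$ or $\bar P_{2}=\iota\bar P_{3}$; applying the corresponding specialization homomorphism to a hypothetical relation $a_{2}D_{2}+a_{3}D_{3}=0$, and using that the limiting divisor remains non-constant in $u$ and hence of infinite order, forces $a_{3}=\mp a_{2}$. Thus any relation collapses to $a_{2}\big((P_{2})\mp(P_{3})\big)\sim 0$, so the whole question of independence is equivalent to the single non-triviality statement that $(P_{2})\mp(P_{3})$ is of infinite order — precisely the fact that the numerical specialization above is designed to certify.
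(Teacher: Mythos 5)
Your reduction is the same as the paper's first step (set $c=b$, so that $P_{2},P_{3}$ from the proof of Theorem \ref{twist2mm} land on the single curve $\cal{C}_{2}:\;y^{2}=x^{m}+bD_{2,m}^{m-1}$, with the quadratic-twist part of the statement immediate), and you correctly isolate the crux: producing \emph{two independent} infinite-order divisor classes. But there your argument has a genuine gap: it terminates in ``exhibit a single numerical triple $(\alpha_{0},v_{0},w_{0})$ \dots a finite check carried out by a two-descent or an explicit height-pairing computation,'' and this check is never performed. Worse, it cannot be a single check: after your (correct) reduction to $\Q(\alpha,v,w)$ with $\alpha=f(u)$, the family still depends on the parameters $m$ and $b$, so your strategy requires a separate, unexecuted certificate for each of infinitely many pairs $(m,b)$, on Jacobians of genus $(m-1)/2$ over $\Q$, where provable independence of two points via descent or canonical heights is itself a substantial project. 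So what you have is a correct reduction of the corollary to an unproven family of computational claims, not a proof. (Two side remarks: for the direction you actually need --- specialized independence implies generic independence --- Silverman's injectivity theorem is not needed at all, since any generic relation specializes to the same relation under the specialization homomorphism; injectivity is for the converse. And your $v\to 1$ cross-check, where $q=-p$ forces $\bar{P}_{3}=\iota\bar{P}_{2}$, indeed collapses any relation to $a_{2}\bigl((P_{2})+(P_{3})-2(\infty)\bigr)\sim 0$, but as you concede this merely re-poses the unproven nontriviality statement.)

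The paper closes exactly this gap purely algebraically, and notably it does \emph{not} prove independence of your $D_{2}$ and $D_{3}$ themselves (which remains open in your approach); it substitutes a better pair of divisors. Define the automorphism $\phi(F(u,v,w))=F(u,\zeta_{2m}v,-w)$ of $\Q(\zeta_{2m})(u,v,w)$. By (\ref{pqsol2mm}) and (\ref{Tsol233}) with $c=b$ one has $\phi(p)=-p$, $\phi(q)=q$, $\phi(T)=T$, hence $\phi(D_{2,m})=D_{2,m}$, so $\phi$ induces $\bar{\phi}$ on $\cal{C}_{2}$ and on $\cal{J}_{2}$, with $\bar{\phi}(P_{3})=P_{3}$ and $\bar{\phi}(P_{2})=(\zeta_{m}^{-1}x_{2},-y_{2})$. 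Replacing $D_{2}$ by the trace divisor $D_{2}':=\sum_{i=0}^{m-1}\bar{\phi}^{i}(D_{2})$, the hyperelliptic relation gives $\bar{\phi}(D_{2}')\sim D_{2}'-2D_{2}$, so $D_{2}'$ has infinite order; and applying $\bar{\phi}$ to a putative relation $\alpha D_{2}'+\beta D_{3}\sim 0$ and subtracting yields $2\alpha D_{2}\sim 0$, forcing $\alpha=0$ and then $\beta=0$. This two-line Galois-equivariance argument is uniform in $m$, $b$ and $f$, requires no specialization and no computation, and is what your writeup should use in place of the deferred numerical verification: keep your $D_{3}$, swap your $D_{2}$ for the trace $D_{2}'$, and the corollary follows.
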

\begin{proof}
First we take $b=c$ in all expressions from the proof of Theorem \ref{twist2mm}. This implies that the points $P_{2}, P_{3}$ constructed in the proof lie on the same curve $\cal{C}_{2}:=C_{2}'=C_{3}':\;y^{2}=x^{m}+bD_{2,m}^{m-1}$, where the function $D_{2,m}$ is just the function $D_{2,m,m}$ from the proof of Theorem \ref{twist2mm} with $c$ replaced by $d$. Next, we define the automorphism $\phi$ of the field $\mathbb{Q}%
(\zeta_{2m})(u,v,w)$ as follows
$$\phi(F(u,v,w))=F(u,\zeta_{2m}v,-w).$$
Here $\zeta_{2m}$ is the $2m$-th primitive root of unity. By the formulas (\ref{pqsol2mm}) and (\ref{Tsol233}) we get $\phi(p)=-p$, $\phi(q)=q$ and $\phi(T)=T$. Hence
$\phi(D_{2,m})=D_{2,m}$, so $\phi$ induces the map $\bar{\phi}$ on the
curve $\cal{C}_{2}$ and on its Jacobian, say $\cal{J}_{2}$. Note that $\bar{\phi}(P_{2})=(\zeta_{m}^{-1}x_{2},-y_{2})$ and $\bar{\phi}(P_{3})=(x_{3},y_{3})$, where $P_{2}=(x_{2},y_{2})$ and $P_{3}=(x_{3},y_{3})$ lie on the curve
$\cal{C}_{2}$ which is the $m$-twist of the curve $C_{2}.$ Consider the
$\Q(u,v,w)$-rational divisors on $\cal{C}_{2}$
\begin{align*}
D_{2}  & :=(P_{2})-(\infty), D_{3}:=(P_{3})-(\infty),\\
D_{2}^{\prime}  & :=\sum_{i=0}^{m-1}((\zeta_{m}^{-1}x_{2},-y_{2}))-m(\infty)=\sum_{i=0}^{m-1}\bar{\phi}^{i}(P_{2})-m(\infty)=\sum_{i=0}^{m-1}\bar{\phi}^{i}(D_{2}).
\end{align*}
From the proof of the Theorem \ref{twist2mm} we know that $D_{2}$ and $D_{3}$ are of infinite order in $\cal{J}_{2}(Q(u,v,w))$. Moreover, $\bar{\phi}(D_{2})=\bar{\phi}(P_{2})-\infty$, $\bar{\phi}(D_{3})=D_{3}$, and consequently $\bar{\phi}(D_{2}')=D_{2}'+(x_{2},-y_{2})-(x_{2},y_{2})\sim D_{2}'-2D_{2}$. Therefore $(1-\phi)D_{2}'=2D_{2}$, so $D_{2}'$ is of infinite order in $\cal{J}_{2}(\Q(u,v,w))$ too. It
remains to prove that the divisors $D_{2}'$ and $D_{3}$ are linearly
independent. Suppose that $\alpha D_{2}'+\beta D_{3}\sim 0$ for some
integer $\alpha$ and $\beta$. Applying the automorphism $\bar{\phi}$ we obtain
$\alpha D_{2}'-2\alpha D_{2}+\beta D_{3}\sim0$. Subtracting the first
equation from the second we have $2\alpha D_{2}\sim 0$, hence $\alpha=0$.
Consequently $\beta=0$ and we are done.
\end{proof}

\section{Quadratic twist and two $2m$-twists}\label{Section3}

The aim of this section is to prove the following result.

\begin{thm}\label{twist22m2m}
Let $f\in\Q[x]$ and suppose that $f$ has no multiple roots. Let $m$ be an odd positive integer and $b,c\in\Z$.
Consider the hyperelliptic curves given by {\rm (\ref{hypcurves})}. Then there exists a rational function
$D_{2,2m,2m}\in\Q(u,v,w)$ such that the Jacobian of the quadratic twist of the curve
$C_{1}$ and the Jacobians of the $2m$-twists of the curves $C_{2}, C_{3}$ by $D_{2,2m,2m}(u,v,w)$ have positive rank over the field $\Q(u,v,w)$.
\end{thm}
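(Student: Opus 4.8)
The plan is to follow the template of the proof of Theorem \ref{twist2mm}, adapting the substitution to the $2m$-twist. I would start from the requirement that the three curves share one common twisting value $D$, which amounts to solving
\begin{equation*}
\frac{f(x_{1})}{y_{1}^2}=\frac{y_{2}^2-x_{2}^m}{b}=\frac{y_{3}^2-x_{3}^m}{c}=:D,
\end{equation*}
subject to $x_{i}y_{i}f(x_{1})(y_{2}^2-x_{2}^m)(y_{3}^2-x_{3}^m)\neq 0$; a solution yields a point on the quadratic twist $C_{1}':\,Dy^2=f(x)$ and points on the $2m$-twists $C_{2}':\,y^2=x^m+bD$ and $C_{3}':\,y^2=x^m+cD$.

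Next I would put $x_{1}=u$, $x_{2}=v^2T^2$, $y_{2}=pT^m$, $x_{3}=T^2$, $y_{3}=qT^m$, chosen so that $y_{2}^2-x_{2}^m=T^{2m}(p^2-v^{2m})$ and $y_{3}^2-x_{3}^m=T^{2m}(q^2-1)$. Equating the last two expressions then collapses the system to the genus-zero conic
\begin{equation*}
c\,(p^2-v^{2m})=b\,(q^2-1),
\end{equation*}
which carries the obvious $\Q(v)$-rational point $(p,q)=(v^m,1)$. Parametrizing this conic by a slope $w$ gives $p,q\in\Q(v,w)$, and the common value becomes $D=T^{2m}(p^2-v^{2m})/b$, where $T$ survives as a free scaling parameter; the points $P_{2},P_{3}$ on $C_{2}',C_{3}'$ are read off at once.

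The main obstacle is the quadratic twist $C_{1}'$. Taking $x_{1}=u$ forces $f(u)/D$ to be a square in $\Q(u,v,w)$, i.e.\ $f(u)$ and $D$ must lie in the same square class. In the $m$-twist case of Theorem \ref{twist2mm} this held automatically, because the conic value entered $D$ only through $T^{m-1}$ with $m-1$ even, hence as a perfect square; for the $2m$-twist the parity is reversed, the conic value enters $D$ to the first power, and an extra condition $bf(u)(p^2-v^{2m})\in\Q(u,v,w)^{*2}$ must be arranged. Using the conic relation $p^2-v^{2m}=\tfrac{b}{c}(q^2-1)$, this is equivalent to $c\,f(u)(q^2-1)\in\Q(u,v,w)^{*2}$, which I would solve by viewing $c f(u)q^2-z^2=c f(u)$ as a conic in $(q,z)$ with the rational point $(1,0)$; its parametrization yields $q=(\kappa^2+cf(u))/(\kappa^2-cf(u))$ and hence the desired square value.

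Finally I would feed this $q$ back into the conic and exploit the remaining freedom in $v$ together with the factorization $p^2-v^{2m}=(p-v^m)(p+v^m)$ to realize the corresponding $p$ rationally, keeping $T$ free so that three independent parameters remain; this pins down $D_{2,2m,2m}\in\Q(u,v,w)$. Infinite order of the divisors then follows exactly as in Theorem \ref{twist2mm}: $(P_{1})-(\infty)$ is non-torsion because $C_{1}'$ is a non-constant quadratic twist of the constant curve $C_{1}$, while $(P_{2})-(\infty)$ and $(P_{3})-(\infty)$ are non-torsion by \cite[Proposition 2.1]{JeToUl}, the points being non-constant. I expect the genuinely delicate step to be precisely the simultaneous realization of the square condition and conic-membership while retaining honest three-parameter freedom, since reconciling them amounts to producing rational points on the auxiliary curve cut out by $bf(u)(p^2-v^{2m})\in\Q(u,v,w)^{*2}$, and it is this parity-induced obstruction that distinguishes the $2m$-twist case from the $m$-twist case.
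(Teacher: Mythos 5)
Your frame is the right one (the same system {\rm(\ref{22m2msys})}, the same non-vanishing requirements, the same infinite-order endgame), but your substitution manufactures an obstruction that the actual proof never meets, and your proposed resolution of it does not close. With $x_{2}=v^{2}T^{2}$, $y_{2}=pT^{m}$, $x_{3}=T^{2}$, $y_{3}=qT^{m}$, the common value is $D=T^{2m}(p^{2}-v^{2m})/b$, so the quadratic twist indeed forces $bf(u)(p^{2}-v^{2m})\in\Q(u,v,w)^{*2}$, and via the conic $c(p^{2}-v^{2m})=b(q^{2}-1)$ you correctly reduce this to $cf(u)(q^{2}-1)=\square$, pinning $q=(\kappa^{2}+cf(u))/(\kappa^{2}-cf(u))$. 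But once $q$ is pinned, the step ``feed $q$ back into the conic and realize $p$ rationally'' is exactly where the argument breaks: you must solve $p^{2}-v^{2m}=s$ with $s:=(b/c)(q^{2}-1)=4b\kappa^{2}f(u)/(\kappa^{2}-cf(u))^{2}$ now a \emph{fixed} element of $\Q(u,\kappa)$. The factorization trick $p-v^{m}=\lambda$, $p+v^{m}=s/\lambda$ determines $v^{m}=(s/\lambda-\lambda)/2$, not $v$; since $m\geq 3$ is odd, $v$ is an irrational $m$-th root, and with it $x_{2}=v^{2}T^{2}$ leaves the rational function field. Intrinsically, for fixed $q$ the equation $p^{2}=v^{2m}+s$ is a hyperelliptic curve of genus $m-1\geq 2$ in the $(v,p)$-plane over $\Q(u,\kappa)$, so no rational parametrization exists and no third free parameter can be extracted from it; the residual freedom in $T$ does not help, because $T$ enters $D$ only through the $2m$-th power factor and drops out of both the conic and the square condition. (For $m=1$, the elliptic sextic-twist case, this curve is a conic and your plan would go through, which is presumably why the parity heuristic felt plausible.)

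Your diagnosis that the parity of the exponent of $T$ inevitably reverses in the $2m$-twist case is also not correct: it is an artifact of putting $T^{2}$ into $x_{2},x_{3}$. The paper keeps $D_{2,2m,2m}=f(u)T^{m-1}$ with the \emph{even} exponent $m-1=2n$, exactly as in Theorem \ref{twist2mm}, by choosing $x_{1}=u$, $x_{2}=T$, $x_{3}=v^{2}T$, $y_{1}=T^{-n}$, $y_{2}=pT^{n}$, $y_{3}=qT^{n}$. Then $y_{2}^{2}-x_{2}^{m}=T^{2n}(p^{2}-T)$ and $y_{3}^{2}-x_{3}^{m}=T^{2n}(q^{2}-v^{2m}T)$, so {\rm(\ref{22m2msys})} collapses to the system {\rm(\ref{sys22m2ms1})}, $f(u)=(p^{2}-T)/b=(q^{2}-v^{2m}T)/c$, whose first equation is \emph{linear} in $T$ (namely $T=p^{2}-bf(u)$); eliminating $T$ leaves the single genus-zero conic $v^{2m}p^{2}-q^{2}+(c-bv^{2m})f(u)=0$ over $\Q(u,v)$, parametrized through the point at infinity $[1:v^{m}:0]$ by a slope $w$. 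Because the conic content is absorbed into $T$ (which is solved for) rather than into a unit multiplying $T^{2m}$, the quadratic-twist condition is automatic, $Dy_{1}^{2}=f(u)T^{m-1}T^{-2n}=f(u)$, and all three parameters $u,v,w$ remain honestly free. Your endgame (non-constant quadratic twist for $P_{1}$; \cite[Proposition 2.1]{JeToUl} for $P_{2},P_{3}$) coincides with the paper's and is fine once the construction is repaired along these lines.
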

\begin{proof}We use similar idea as in the proof of the Theorem 3.1 from \cite{Ul3}. Let $m=2n+1$ with $n\in\N$.
In order to find the function we are looking for we need to find solutions of the following system of equations
\begin{equation}\label{22m2msys}
\frac{f(x_{1})}{y_{1}^2}=\frac{y_{2}^2-x_{2}^{m}}{b}=\frac{y_{3}^2-x_{3}^{m}}{c}.
\end{equation}
It is clear that we need solutions $x_{i}, y_{i}$ of the system (\ref{2mmsys}) which satisfy the condition
$x_{i}y_{i}f(x_{1})(y_{2}^2-b)(y_{3}^2-c)\neq 0$ for any $i=1,2,3$.

In order to find solutions of the system (\ref{22m2msys}) we make the
following substitutions
\begin{equation*}\label{sub2}
  x_{1}=u,\quad  x_{2}=T,\quad x_{3}=v^2T
  \quad   y_{1}=\frac{1}{T^{n}}, \quad y_{2}=pT^{n}, \quad y_{3}=qT^{n},
\end{equation*}
where $u,v$ are rational parameters and $p,q,T$ have to be
determined. After this substitution the system (\ref{22m2msys}) simplifies and we get
\begin{equation}\label{sys22m2ms1}
f(u)=\frac{p^2-T}{b}=\frac{q^2-v^{2m}T}{c}.
\end{equation}
Solving now the first and the second equation from the above system
with respect to $T$ we get that
\begin{equation*}
T=p^{2}-bf(u)=\frac{cp^2-bq^2}{c-bv^{2m}}.
\end{equation*}
In order to get solutions of the system
(\ref{sys22m2ms1}) we need to solve the equation
\begin{equation*}
v^{2m}p^2-q^2+(c-bv^{2m})f(u)=0,
\end{equation*}
which define the curve, say $C$, in $(p,q)$ plane, of genus 0 with $\Q(u,v)$-rational point
at infinity $[p:q:r]=[1:v^m:0]$. The parametrization of this curve is given by
\begin{equation}\label{pqsol22m2m}
p=\frac{v^{2m}w^2-f(u)(c-bv^{2m})}{2v^{2m}w},\quad
q=\frac{-v^{2m}w^2-f(u)(c-bv^{2m})}{2v^{m}w}.
\end{equation}
Using the computed value of $p$ we get the expression for $T$ in the following form
\begin{equation*}
T=T(u,v,w)=\frac{v^{4m}w^{4}-2f(u)v^{2m}(c+bv^{2m})w^2+f(u)^2(c-bv^{2m})^2}{4v^{4m}w^2}.
\end{equation*}
The value of $D_{2,2m,2m}$ we are looking for is just the common value of the expressions from the system (\ref{22m2msys}) and takes the form
\begin{equation*}
D_{2,2m,2m}(u,v,w)=f(u)T(u,v,w)^{m-1}.
\end{equation*}

The performed construction of $p, q$ and $T$ guarantees that the (non-constant) point
\begin{equation*}
P_{1}=\left(u,\;\frac{1}{T(u,v,w)^{n}}\right)
\end{equation*}
lies on the curve $C_{1}':\;D_{2,2m,2m}(u,v,w)y^2=f(x)$ which is the quadratic twist of the curve $C_{1}$ by
$D_{2,2m,2m}(u,v,w)$. Moreover, the non-constant points
\begin{align*}
&P_{2}=\left(T(u,v,w),\;pT(u,v,w)^{n}\right),\\
&P_{3}=\left(v^2T(u,v,w),\;qT(u,v,w)^{n}\right),
\end{align*}
with non-zero second coordinates lie on the curves
\begin{equation*}
C_{2}'':\;y^2=x^m+bD_{2,2m,2m}(u,v,w),\quad
C_{3}'':\;y^2=x^m+cD_{2,2m,2m}(u,v,w)
\end{equation*}
which are $2m$-twists of the curves $C_{2}, C_{3}$ respectively.

Using the same argument as at the end of the proof of Theorem
\ref{twist2mm} we deduce that the divisor $(P_{i})-(\infty)$ is of infinite order
in the group $Jac(C_{i}')(\Q(u,v,w))$ for $i=1,2,3$.

\end{proof}

\begin{cor}\label{22m2mcor2}
Let $m, b\in\Z$ and consider the hyperelliptic curves $C_{1}:\;y^2=f(x),\;C_{2}:\;y^2=x^m+b$. Then there exists a
rational function $D_{2,2m}\in\Q(u,v,w)$ such that the Jacobian of the quadratic twist of $C_{1}$ by $D_{2,2m}(u,v,w)$ has positive rank
and the Jacobian of the $2m$-twist of $C_{2}$ by $D_{2,2m}(u,v,w)$ has rank $\geq 2$ over $\Q(u,v,w)$.
\end{cor}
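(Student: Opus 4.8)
The plan is to imitate the proof of Corollary \ref{2mmcor2} almost verbatim, adapting it to the $2m$-twist setting of Theorem \ref{twist22m2m}. First I would specialize the construction of Theorem \ref{twist22m2m} by setting $c=b$. With this choice the two curves $C_{2}''$ and $C_{3}''$ coincide, producing a single curve $\cal{C}_{2}:\;y^2=x^m+bD_{2,2m}(u,v,w)$, where $D_{2,2m}$ denotes the function $D_{2,2m,2m}$ from Theorem \ref{twist22m2m} after the substitution $c\mapsto b$ (renaming a free parameter, say $c\mapsto d$, if one wishes to keep $b$ and the specialization distinct). The two points $P_{2}=(x_{2},y_{2})$ and $P_{3}=(x_{3},y_{3})$ constructed in that proof then both lie on $\cal{C}_{2}$, and by Theorem \ref{twist22m2m} the associated divisors $D_{2}:=(P_{2})-(\infty)$ and $D_{3}:=(P_{3})-(\infty)$ are of infinite order in the Jacobian $\cal{J}_{2}:=Jac(\cal{C}_{2})$ over $\Q(u,v,w)$.

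Next I would introduce a Galois automorphism of $\Q(\zeta_{2m})(u,v,w)$ that moves $P_{2}$ while fixing $P_{3}$ (up to the action of the hyperelliptic involution), so that averaging over its orbit yields a second independent divisor. The natural candidate, by analogy with Corollary \ref{2mmcor2}, is
$$\phi(F(u,v,w))=F(u,\zeta_{2m}v,-w),$$
and the first task is to verify from the explicit formulas (\ref{pqsol22m2m}) and the displayed expression for $T$ how $p$, $q$, and $T$ transform. One checks that $\phi$ fixes $D_{2,2m}$ (so that it descends to an automorphism $\bar{\phi}$ of $\cal{C}_{2}$ and of $\cal{J}_{2}$), acts on $P_{2}$ by sending it to $(\zeta_{m}^{-1}x_{2},\pm y_{2})$, and fixes $P_{3}$; the sign on $y_{2}$ together with the scaling of the first coordinate is exactly what drives the argument. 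I would then form the averaged divisor
$$D_{2}':=\sum_{i=0}^{m-1}\bar{\phi}^{i}(D_{2})$$
and show, just as in Corollary \ref{2mmcor2}, that $\bar{\phi}(D_{2}')\sim D_{2}'-2D_{2}$, whence $(1-\bar{\phi})D_{2}'=2D_{2}$, so $D_{2}'$ has infinite order.

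Finally I would establish linear independence of $D_{2}'$ and $D_{3}$ over $\Z$. Suppose $\alpha D_{2}'+\beta D_{3}\sim 0$; applying $\bar{\phi}$ and using $\bar{\phi}(D_{2}')\sim D_{2}'-2D_{2}$ together with $\bar{\phi}(D_{3})\sim D_{3}$ gives $\alpha D_{2}'-2\alpha D_{2}+\beta D_{3}\sim 0$, and subtracting the two relations yields $2\alpha D_{2}\sim 0$, forcing $\alpha=0$ and then $\beta=0$. This shows the rank of $\cal{J}_{2}$ is at least $2$, while the divisor on $C_{1}'$ already has infinite order by Theorem \ref{twist22m2m}, completing the proof. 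The main obstacle is purely computational: unlike in Corollary \ref{2mmcor2}, here $x_{2}=T$ and $x_{3}=v^2T$ with $T$ of fourth degree in $w$, so I must confirm that the precise automorphism above really sends $p\mapsto -p$, $q\mapsto q$, $T\mapsto T$ (equivalently, that $\phi$ fixes the curve $\cal{C}_{2}$ and acts on its two points as claimed); if the naive choice of $\phi$ does not produce these transformations, I would adjust it—for instance replacing $v\mapsto\zeta_{2m}v$ by $v\mapsto\zeta_{m}v$ or altering the sign on $w$—so that it fixes $D_{2,2m}$ and the two points transform in the required manner.
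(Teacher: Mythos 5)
Your overall strategy is exactly the paper's: set $c=b$ so that $P_{2},P_{3}$ land on a single curve $\cal{C}_{2}$, choose an automorphism of $\Q(\zeta_{2m})(u,v,w)$ fixing $D_{2,2m}$, average the moved divisor over its orbit, deduce a relation of the shape $\bar{\psi}(D')\sim D'-2D$, and conclude independence by applying the automorphism to a supposed dependence. However, the specific automorphism you name, $\phi(F(u,v,w))=F(u,\zeta_{2m}v,-w)$, is the one from Corollary \ref{2mmcor2} and it fails in this setting; moreover your description of its action is internally inconsistent. From (\ref{pqsol22m2m}) one indeed gets $\phi(p)=-p$, $\phi(q)=q$, $\phi(T)=T$, as you compute; but since in the proof of Theorem \ref{twist22m2m} the points are $P_{2}=(T,\,pT^{n})$ and $P_{3}=(v^{2}T,\,qT^{n})$, this forces $\bar{\phi}(P_{2})=(x_{2},-y_{2})$ (the hyperelliptic involution --- there is no $\zeta_{m}^{-1}$ scaling, because here $x_{2}=T$ contains no power of $v$, unlike in Section \ref{Section2}) and $\bar{\phi}(P_{3})=(\zeta_{m}x_{3},y_{3})$, not the claimed ``$P_{2}\mapsto(\zeta_{m}^{-1}x_{2},\pm y_{2})$, $P_{3}$ fixed''. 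With this $\phi$ the construction collapses: $D_{2}+\bar{\phi}(D_{2})=\op{div}(x-x_{2})\sim 0$, so your averaged divisor satisfies $D_{2}'\sim (n+1)D_{2}-nD_{2}=D_{2}$ and yields no new generator; the orbit sum of $D_{3}$ is equally useless, being $\op{div}(y-y_{3})\sim 0$; and since $\bar{\phi}(D_{3})=((\zeta_{m}x_{3},y_{3}))-(\infty)\not\sim D_{3}$, the subtraction step in your independence argument is not available.

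The repair is precisely the one your final hedge anticipates: drop the sign change on $w$ and take $\psi(F(u,v,w))=F(u,\zeta_{2m}v,w)$, which is what the paper does. Then $\psi(p)=p$, $\psi(q)=-q$, $\psi(T)=T$, so $\bar{\psi}(P_{2})=P_{2}$ while $\bar{\psi}(P_{3})=(\zeta_{m}x_{3},-y_{3})$; that is, the roles of the two points are exchanged relative to Corollary \ref{2mmcor2}, and one must average $D_{3}$ rather than $D_{2}$. Setting $D_{3}'':=\sum_{i=0}^{m-1}\bar{\psi}^{i}(D_{3})$, the fact that $m$ is odd gives $\bar{\psi}^{m}(D_{3})=((x_{3},-y_{3}))-(\infty)\sim -D_{3}$, whence $\bar{\psi}(D_{3}'')\sim D_{3}''-2D_{3}$, and your independence argument then runs verbatim with the $\bar{\psi}$-fixed divisor $D_{2}$ and the averaged divisor $D_{3}''$. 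In short, the proposal is the right method attached to the wrong automorphism: as written, the key step fails, but the failure is exactly the verification you flagged as the main obstacle, and the adjustment you list (altering the sign on $w$) is the paper's actual choice --- provided you also notice that it is $P_{3}$, not $P_{2}$, whose orbit must be averaged.
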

\begin{proof}
The proof goes through in a similar manner to the proof of
Corollary \ref{2mmcor2}. First we take $c=b$ in all expressions from the proof of Theorem \ref{twist22m2m}. This implies that the points $P_{2}, P_{3}$ constructed in the proof lie on the same curve $\cal{C}_{2}:=C_{2}''=C_{3}'':\;y^{2}=x^{m}+bD_{2,2m}$, where the function $D_{2,2m}$ is just the function $D_{2,2m,2m}$ constructed in the proof of Theorem \ref{twist22m2m} with $c$ replaced by $b$. Now, we consider the automorphism $\psi(F(u,v,w))=F(u,\zeta
_{2m}v,w)$ of the field $\Q(\zeta_{2m})(u,v,w)$. We have $\psi(p)=p$, $\psi(q)=-q$ and $\psi(T)=T$. Hence
$\psi(D_{2,2m})=D_{2,2m}$, so $\psi$ induces the map $\bar{\psi}$ on the
curve $\cal{C}_{2}$ and on its Jacobian, say $\cal{J}_{2}$. In this case $\bar{\psi}(P_{2})=P_{2}$ and
$\bar{\psi}(P_{3})=(\zeta_{m}x_{3},-y_{3})$, where $P_{2}$ and $P_{3}$ lie on
the curve $C_{2}^{\prime\prime}$ which is the $2m$-twist of the curve
$C_{2}$. We consider the following $\Q(u,v,w)$-rational divisors on the curve $\cal{C}_{2}$
\begin{equation*}
D_{2}:=(P_{2})-(\infty), \quad\quad D_{3}:=(P_{3})-(\infty), \quad\quad D_{3}'':=\sum_{i=0}^{m-1}\bar{\psi}^{i}(D_{3}).
\end{equation*}
We have the equalities $\bar{\psi}(D_{2})=D_{2}$, $\bar{\psi}(D_{3})=\bar{\psi}(P_{3})-(\infty)$ and $\bar{\psi}(D_{3}'')=D_{3}''-2D_{3}$. Hence $D_{2}$, $D_{3}$ and $D_{3}''$ give points of infinite order in $\cal{J}_{2}(\Q(u,v,w))$. If $\alpha D_{2}+\beta D_{3}''\sim 0$,
for some integer $\alpha$ and $\beta$, then applying $\bar{\psi}$ we obtain
$\alpha=\beta=0$, and the assertion follows.
\end{proof}

\section{Two $m$-twists and a $2m$-twist}\label{Section4}

In this section we assume that the polynomial $f$ has the form $f(x)=x^m+a$, thus the curve $C_{1}$ is given by the
equation $y^2=x^m+a$. This implies that the curve admits $m$-twists. We prove the following

\begin{thm}\label{twistmm2m}
Let $a,b,c\in\Z\setminus\{0\}$ and let $m$ be an odd positive integer. Let us consider the hyperelliptic curves
\begin{equation}\label{hypcurves1}
C_{1}:\;y^2=x_{1}^m+a,\quad C_{2}:\;y^2=x^m+b,\quad C_{3}:\;y^2=x^m+c.
\end{equation}
Then there exist a rational function $D_{m,m,2m}\in\Q(u,v,w)$ such that the
Jacobians of the $m$-twists of the curves $C_{1}, C_{2}$ and the Jacobian of the $2m$-twist of
the curve $C_{3}$ by $D_{m,m,2m}(u,v,w)$ have positive rank over the field $\Q(u,v,w)$.
\end{thm}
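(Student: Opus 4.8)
The plan is to follow the same unifying strategy used in Theorems \ref{twist2mm} and \ref{twist22m2m}: express the compatibility between the three twists as a single equation of equal ratios, reduce it by a clever substitution to a genus-$0$ curve over a function field, parametrize that conic rationally, and read off the twisting function $D_{m,m,2m}$ together with explicit rational points on each of the three twisted curves. The new feature here is that $C_1$ now admits an $m$-twist (since $f(x)=x^m+a$), so the left-hand ratio should be set up in the $m$-twist form $\frac{y_1^2-x_1^m}{a}$ rather than the quadratic form $\frac{f(x_1)}{y_1^2}$. Thus I would seek solutions of
\begin{equation*}
\frac{y_{1}^2-x_{1}^m}{a}=\frac{y_{2}^2-x_{2}^m}{b}=\frac{y_{3}^2-c}{x_{3}^m},
\end{equation*}
where the first two entries are in $m$-twist shape and the third in $2m$-twist shape, reflecting that $C_1,C_2$ are to be $m$-twisted and $C_3$ is to be $2m$-twisted.

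\emph{First} I would choose a substitution analogous to \eqref{sub1} and \eqref{sub2} that collapses two of the three equal ratios into a linear-in-$T$ relation and leaves the remaining equality as a conic. A natural choice mirroring Theorem \ref{twist22m2m} is to set $x_1=u$, $x_2=v^2 T$, $y_1=T^{-n}$, $y_2=pT^{n}$ for the two $m$-twisted curves, and $x_3=1/T^{n}$, $y_3=q$ for the $2m$-twisted curve, introducing parameters $u,v$ and unknowns $p,q,T$. After simplification the system should reduce to one relation giving $T$ as a rational function of $p$ (from equating the first two ratios) and one conic in $(p,q)$ over $\Q(u,v)$ carrying an obvious rational point at infinity of the form $[1:v^m:0]$ or similar. \emph{Then} I would parametrize that conic by a third parameter $w$, exactly as in \eqref{pqsol2mm} and \eqref{pqsol22m2m}, obtaining $p,q\in\Q(u,v,w)$ and hence $T=T(u,v,w)$ in closed form.

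\emph{Next}, setting $D_{m,m,2m}(u,v,w)$ equal to the common value of the three ratios (which by analogy will be a monomial in $f(u)=u^m+a$ and $T$, up to the appropriate power $T^{m-1}$), I would verify by direct back-substitution that the constructed coordinates yield $\Q(u,v,w)$-rational points $P_1,P_2,P_3$ on the twisted curves
\begin{equation*}
C_1':\;y^2=x^m+aD_{m,m,2m}^{\,m-1},\quad C_2':\;y^2=x^m+bD_{m,m,2m}^{\,m-1},\quad C_3'':\;y^2=x^m+cD_{m,m,2m},
\end{equation*}
taking care that these curves are genuine $m$-twists (resp.\ $2m$-twist) of $C_1,C_2,C_3$ and that the twisting factor is not a perfect $m$-th (resp.\ $2m$-th) power in $\Q(u,v,w)$. \emph{Finally}, to conclude positive rank of each Jacobian, I would invoke \cite[Proposition 2.1]{JeToUl} precisely as at the end of Theorem \ref{twist2mm}: each $P_i$ is a non-constant point with nonzero $y$-coordinate on a curve of the form $y^2=x^m+h(u,v,w)$ with $h\notin\Q$, so the divisor $(P_i)-(\infty)$ has infinite order in $\op{Jac}(C_i')(\Q(u,v,w))$.

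\emph{The main obstacle} I expect is arranging the substitution so that all three ratios simultaneously linearize into a single conic rather than a higher-genus curve. Because two curves are now $m$-twisted and only one is $2m$-twisted, the symmetry exploited in the previous two theorems is broken, and the roles played by $x_i,y_i$ must be assigned asymmetrically; a naive choice may leave a quadratic-times-$T^{2n}$ term that fails to cancel, producing a non-rational residual curve. Checking that the resulting $D_{m,m,2m}$ genuinely effects the intended mixed $(m,m,2m)$-twist, and not an accidental quadratic twist on some factor, is the delicate bookkeeping step; once the correct substitution is found, the parametrization of the conic and the infinite-order argument are routine.
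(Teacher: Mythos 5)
Your overall strategy (equal-ratio system, substitution reducing to a genus-$0$ conic, rational parametrization, then \cite[Proposition 2.1]{JeToUl} for infinite order) is exactly the paper's, but your system encodes the wrong twists: you have the two twist shapes swapped. With the paper's conventions from Section \ref{Section1}, an $m$-twist of $y^2=x^m+a$ by $D$ is $y^2=Dx^m+a$, so the $m$-twist ratio is $\frac{y^2-a}{x^m}=D$, while a $2m$-twist is $y^2=x^m+aD$, so the $2m$-twist ratio is $\frac{y^2-x^m}{a}=D$. You call $\frac{y_1^2-x_1^m}{a}$ the ``$m$-twist form,'' but it is the $2m$-twist form; consequently your system $\frac{y_{1}^2-x_{1}^m}{a}=\frac{y_{2}^2-x_{2}^m}{b}=\frac{y_{3}^2-c}{x_{3}^m}$ forces $y_1^2=x_1^m+aD$, $y_2^2=x_2^m+bD$, $y_3^2=Dx_3^m+c$, i.e.\ it would produce $2m$-twists of $C_1,C_2$ and an $m$-twist of $C_3$ --- the reverse of the asserted $(m,m,2m)$ configuration, and inconsistent with the target curves $C_1',C_2',C_3''$ you yourself list (a solution of your system gives a point on $y^2=x^m+aD$, not on $y^2=x^m+aD^{m-1}$). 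Since $a,b,c$ are arbitrary, the two configurations are not interchangeable by relabelling. The paper's system is $\frac{y_{1}^2-a}{x_{1}^m}=\frac{y_{2}^2-b}{x_{2}^m}=\frac{y_{3}^2-x_{3}^m}{c}$.

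There is a second, subsidiary problem: even after correcting the system, the substitution you propose ($x_1=u$, $x_2=v^2T$, $y_1=T^{-n}$, $y_2=pT^{n}$, $x_3=T^{-n}$, $y_3=q$) does not work, because the three ratios then scale as incompatible expressions in $T$ (for instance the first becomes $(T^{-2n}-a)/u^m$, which is not homogeneous in $T$ at all), so nothing linearizes and no conic appears. The paper's working choice is $x_1=1/T$, $x_2=1/(u^2T)$, $x_3=vT$, $y_1=p$, $y_2=q$, $y_3=T^{n}$: the two $m$-twist ratios become $(p^2-a)T^m$ and $u^{2m}(q^2-b)T^m$, so their equality is the conic $p^2-a=u^{2m}(q^2-b)$ over $\Q(u)$ (parametrized by $w$), while the $2m$-twist equality is linear in $T$ and yields $T=\frac{1}{cu^{2m}(q^2-b)+v^m}$ --- note this is the opposite division of labour to the one you anticipated, where you expected $T$ to come from equating the first two ratios and the conic from the remaining one. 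Your concluding step (non-constant points with nonzero ordinate plus \cite[Proposition 2.1]{JeToUl}) is correct and matches the paper, but as written your construction proves a different statement, and the ``delicate bookkeeping'' you flagged is precisely where the argument breaks.
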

\begin{proof}
We use similar idea as in the proofs of previous results. We put $m=2n+1$ and are looking for parametric solutions of the following system of equations
\begin{equation}\label{mm2msys}
\frac{y_{1}^2-a}{x_{1}^m}=\frac{y_{2}^2-b}{x_{3}^m}=\frac{y_{3}^2-x_{3}^m}{c}.
\end{equation}
We are interested in solutions which satisfy the condition
$x_{i}y_{i}(y_{1}^2-a)(y_{2}^2-b)(y_{3}^2-x_{3}^m)\neq 0$ for each $i=1,2,3$.

In order to find solutions of the system (\ref{mm2msys}) we put
\begin{equation*}\label{sub3}
  x_{1}=\frac{1}{T},\quad  x_{2}=\frac{1}{u^2T},\quad x_{3}=vT
  \quad   y_{1}=p, \quad y_{2}=q, \quad y_{3}=T^{n},
\end{equation*}
where $u,v$ are rational parameters and $p,q,T$ have to been
determined. The system (\ref{mm2msys}) simplifies and after simple manipulations we get
that the above system is equivalent to the following
\begin{equation}\label{sysmm2ms2}
p^2-a=u^{2m}(q^2-b),\quad T=\frac{1}{cu^{2m}(q^2-b)+v^{m}}.
\end{equation}
The second equation is just solved and the first defines the genus
zero curve, say $C$, over the field $\Q(u)$ with $\Q(u)$-rational point at infinity
$[p:q:r]=[u^3:1:0]$. Thus, the curve $C$ can be parameterized in the following form
\begin{equation}\label{pqsolmm2m}
p=\frac{w^2+a-bu^{2m}}{2w},\quad q=\frac{-w^2+a-bu^{2m}}{2u^{m}w}.
\end{equation}

Using the computed value of $q$ we find that the value of $T$ takes the form
\begin{equation*}
T=T(u,v,w)=\frac{4w^2}{cw^4-2(ac-2v^m+bcu^{2m})w^2+c(a-bu^{2m})^2}.
\end{equation*}
From the presented construction of the solutions of the system
(\ref{sysmm2ms2}) we get the value
of $D_{m,m,2m}$ which is just the common value of the expressions form (\ref{mm2msys})
\begin{equation*}
D_{m,m,2m}(u,v,w)=\frac{w^4-2(a+bu^{2m})w^2+(a-bu^{2m})^2}{4w^2}T(u,v,w)^{m}.
\end{equation*}

Using now the computed values of $p,q,T$ we get that the non-constant points
\begin{align*}
&P_{1}=((p^2-a)T(u,v,w)^{m-1},\;p(p^2-a)^{n}T(u,v,w)^{n(m-1)}),\\
&P_{2}=\left(\frac{1}{u^2}(p^2-a)T(u,v,w)^{m-1},\;(p^2-a)^{n}qT(u,v,w)^{n(m-1)}\right),
\end{align*}
with non-zero coordinates, lie on the curves
\begin{equation*}
C_{1}':\;y^2=x^m+aD_{m,m,2m}(u,v,w)^{m-1},\;C_{2}':\;y^2=x^m+bD_{m,m,2m}(u,v,w)^{m-1}
\end{equation*}
which are the $m$-twists of the curve $C_{1}, C_{2}$ respectively.
Moreover, the non-constant point
\begin{equation*}
P_{3}=\left(vT(u,v,w),\;T(u,v,w)^{n}\right),
\end{equation*}
lies on the curve $C_{3}':\;y^2=x^3+cD_{m,m,2m}(u,v,w)$ which
is the $2m$-twist of the curve $C_{3}$. Using \cite[Proposition 2.1]{JeToUl} one more time, we get that the divisor $(P_{i})-(\infty)$ is of infinite order in the Jacobian of the curve $C_{i}'$. Our theorem is proved.
\end{proof}

\begin{cor}\label{mm2mcor2}
Let $a,c\in\Z\setminus\{0\}$ and let $m$ be an odd positive integer. Let us consider the hyperelliptic curves
$C_{1}:\;y^2=x^m+a,\;C_{2}:\;y^2=x^m+b$. Then there exists a rational function $D_{m,2m}\in\Q(u,v,w)$ such that
the Jacobian of the $m$-twist of the curve $C_{1}$ by $D_{m,2m}(u,v,w)$ has rank $\geq 2$ and
the Jacobian of the $2m$-twist of the curve $C_{2}$ by $D_{m,2m}(u,v,w)$ has positive rank
over $\Q(u,v,w)$.
\end{cor}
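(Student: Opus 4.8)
The plan is to follow the strategy of Corollary~\ref{2mmcor2}, collapsing the two $m$-twists appearing in Theorem~\ref{twistmm2m} onto a single curve and then producing two independent points on it. First I would specialize the proof of Theorem~\ref{twistmm2m} by taking the two $m$-twist parameters equal (in the notation of that proof, $b=a$). With this choice the curves $C_1'$ and $C_2'$ become one and the same curve $\mathcal{C}_1:\;y^2=x^m+aD_{m,2m}^{m-1}$, which is the $m$-twist of $C_1$, where $D_{m,2m}$ denotes the function $D_{m,m,2m}$ from that proof with $b$ replaced by $a$; both points $P_1$ and $P_2$ now lie on $\mathcal{C}_1$. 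The point $P_3$ still lies on the $2m$-twist $C_3'$, so by Theorem~\ref{twistmm2m} (via \cite[Proposition~2.1]{JeToUl}) the divisor $(P_3)-(\infty)$ has infinite order and the $2m$-twist already has positive rank. Everything therefore reduces to showing that $D_1:=(P_1)-(\infty)$ and $D_2:=(P_2)-(\infty)$ span a subgroup of rank $2$ in $\mathcal{J}_1:=\op{Jac}(\mathcal{C}_1)$.

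To separate $P_1$ and $P_2$ I would introduce the automorphism $\psi$ of $\Q(\zeta_{2m})(u,v,w)$ given by $\psi(F(u,v,w))=F(\zeta_{2m}u,v,w)$, where $\zeta_{2m}$ is a primitive $2m$-th root of unity. The point is that in the parametrization (\ref{pqsolmm2m}) with $b=a$, and in the resulting $T$, the variable $u$ enters $p$ and $T$ only through $u^{2m}$, which is fixed by $\psi$, whereas $q$ carries the extra factor $u^{m}$ in its denominator and $\psi(u^m)=\zeta_{2m}^m u^m=-u^m$. Hence $\psi(p)=p$, $\psi(q)=-q$ and $\psi(T)=T$, so $\psi(D_{m,2m})=D_{m,2m}$ and $\psi$ induces a map $\bar\psi$ on $\mathcal{C}_1$ and on $\mathcal{J}_1$. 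Reading off coordinates, $\bar\psi(P_1)=P_1$ since $P_1$ involves only $p$ and $T$, while the factor $u^{-2}\mapsto\zeta_m^{-1}u^{-2}$ in the first coordinate of $P_2$ together with $\psi(q)=-q$ gives $\bar\psi(P_2)=(\zeta_m^{-1}x_2,-y_2)$, where $P_2=(x_2,y_2)$. This reproduces exactly the situation of Corollary~\ref{2mmcor2}.

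With the action in hand the argument of Corollary~\ref{2mmcor2} applies essentially verbatim. Setting $D_2':=\sum_{i=0}^{m-1}\bar\psi^{i}(D_2)$ and using that $m$ is odd, so $\bar\psi^m(P_2)=(x_2,-y_2)$, one obtains $\bar\psi(D_2')\sim D_2'-2D_2$, i.e. $(1-\bar\psi)D_2'=2D_2$; since $D_2$ has infinite order, so does $D_2'$. Finally, if $\alpha D_2'+\beta D_1\sim0$ for integers $\alpha,\beta$, then applying $\bar\psi$ (which fixes $D_1$) and subtracting yields $2\alpha D_2\sim0$, whence $\alpha=0$ and then $\beta=0$. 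Thus $D_1$ and $D_2'$ are linearly independent, the $m$-twist of $C_1$ has rank $\geq2$, and the corollary follows.

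I expect the one genuinely delicate point to be the choice of $\psi$, and in particular the use of a primitive $2m$-th (rather than $m$-th) root of unity. If one took $\psi:u\mapsto\zeta_m u$ instead, then $q$ would be fixed and $\bar\psi(P_2)=(\zeta_m^{-2}x_2,y_2)$ would carry no sign change on the second coordinate; the trace $\sum_{i=0}^{m-1}\bar\psi^i(P_2)$ would then run over all $m$ points of $\mathcal{C}_1$ with second coordinate $y_2$ and hence equal the principal divisor of $y-y_2$, giving a trivial class and no information. It is precisely the sign flip $\psi(q)=-q$, forced by $\zeta_{2m}^m=-1$, that makes $D_2'$ nontorsion and the two divisors independent; verifying this transformation behavior of $p,q,T$ and that $\bar\psi$ indeed preserves $\mathcal{C}_1$ is the main thing to check.
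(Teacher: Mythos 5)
Your proposal is correct and matches the paper's own proof essentially step for step: the paper likewise sets $b=a$ in Theorem \ref{twistmm2m}, uses the same automorphism $F(u,v,w)\mapsto F(\zeta_{2m}u,v,w)$ (denoted $\bar{\chi}$ there), verifies $\bar{\chi}(P_{1})=P_{1}$ and $\bar{\chi}(P_{2})=(\zeta_{m}^{-1}x_{2},-y_{2})$, and concludes with the identical trace-divisor independence argument borrowed from Corollaries \ref{2mmcor2} and \ref{22m2mcor2}, with $(P_{3})-(\infty)$ handling the $2m$-twist. Your closing observation explaining why a primitive $2m$-th rather than $m$-th root of unity is essential (otherwise the trace divisor is the principal divisor of $y-y_{2}$) is accurate and is a point the paper leaves implicit.
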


\begin{proof}
The proof goes through in a very similar way to the proof of
Corollaries \ref{2mmcor2} and \ref{22m2mcor2}. First we take $b=a$ in all expressions from the proof of Theorem \ref{twistmm2m} and then put $c=b$. This implies that the points $P_{1}, P_{2}$ constructed in the proof lie on the same curve $\cal{C}_{1}:=C_{1}'=C_{2}':\;y^{2}=x^{m}+aD_{m,2m}^{m-1}$, where the function $D_{m,2m}$ is just the function $D_{m,m,2m}$ from the proof of Theorem \ref{twistmm2m} with $b$ replaced by $a$. We consider the automorphism $\bar{\chi }(F(u,v,w))=F(\zeta_{2m}u,v,w)$ which induces the map $\bar{\chi}$ on the curve $\cal{C}_{1}$ and on its Jacobian. We have $\bar{\chi}(P_{1})=P_{1}$ and $\bar{\chi}(P_{2})=(\zeta_{m}^{-1}%
x_{2},-y_{2})$. Just as in the proof of Corollary \ref{22m2mcor2} we obtain that the
divisors $D_{1}:=(P_{1})-(\infty)$ and $D_{2}':=\sum_{i=0}^{m-1}\bar{\chi}^{i}(P_{2})-m(\infty)$ are linearly independent and give points of
infinite order in $Jac(\cal{C}_{1})(\Q(u,v,w))$. Moreover, the divisor $(P_{3})-(\infty)$ is of infinite order in the Jacobian of the curve $C_{2}':\;y^{2}=x^{m}+bD_{m,2m}$ and the proof is complete.
\end{proof}

\section{Remarks on twists of certain superelliptic curve}\label{Section5}

Previous sections are devoted to the study of the existence of simultaneous twists with positive rank of triples of hyperelliptic curves.
In this section we change our object of interest and consider some special families of superelliptic curves.
We are mainly interested in the superelliptic curves defined by the equation
\begin{equation*}
C_{m,a}:\;y^{p}=x^{m}(x+a),
\end{equation*}
where $p$ is a prime number and $m\in\N$ and without loss of generality $0<m<p$.
We note that $C_{m,a}$ has an automorphism $(x,y)\mapsto (x,\zeta_{p}y)$, where $\zeta_{p}$ is a primitive $p$-th root of
unity. Its Jacobian $J_{m,a}$ over $K:=\Q(\zeta_{p})$ has complex multiplication by $\Z[\zeta_{p}]$. Moreover, the genus of
$C_{m,a}$ is equal to $(p-1)/2$. The curves $y^{p}=x^{m}(x+1)^{k}$ and their
Jacobians are studied by many authors (e.g. \cite{Fa, GR, C, Tz}).  These curves are the quotients of the Fermat curve $x^{p}+y^{p}=1$.
In fact, Jacobian of the Fermat curve $x^{p}+y^{p}=1$ is $\Q$-isogenous to the product of Jacobians of the curves $C_{m,1}$ for $m=1,...,p-2$
(see \cite{Fa}). Without loss of generality
(because of a birational equivalence) we may assume that $k=1$ and $1\leq
m\leq p-2$ (or vice versa). In a similar
way one can check that the Jacobian of the twisted Fermat curve $x^{p}+y^{p}=a$ is
$\Q$-isogenous to the product of $J_{m,a}$ for $m=1,...,p-2$ (c.f. \cite{DJ}).

For any given $D\in\Z\setminus\{0\}$ which is not a $p$-th power we can consider a $p$-twist of $C_{m,a}$
given by the equation $C_{m,a,D}:\;Dy^{p}=x^{m}(x+a)$. We are interested in finding values of $D\in\Z$ such that the simultaneous $p$-twists
of $C_{m,a}$ and $C_{m,b}$ by $D$ contain rational points. This is an easy problem as we will see in the proof of lemma given below.
A more interesting question is related to the computation of torsion part of the Jacobian $J_{m,a}$ associated with the curve $C_{m,a}$.
It is clear that this knowledge is useful in proving that certain divisors are of infinite order in $J_{m,a}$.
In fact, our effort devoted to the computation of torsion part $J_{m,a}$ will occupy the main part of this section.
Combining our results we prove that the rank of Jacobian varieties associated with twists of $C_{m,a}, C_{m,b}$ by constructed $D$
have both positive ranks over an appropriate rational function field.

We start with construction of a function $D$ which will be suitable to our purposes.
We prove the following result which is slightly more general then we need but can be of independent interest.

\begin{lem}\label{T5}
\begin{enumerate}
\item Let $p, m\in\N$ and $a, b\in \Z\setminus\{0\}$ are given and suppose that $m<p$.
Then the equation
\begin{equation*}\label{superell}
\frac{x_{1}^{m}(x_{1}+a)}{y_{1}^{p}}=\frac{x_{2}^{m}(x_{2}+b)}{y_{2}^{p}}
\end{equation*}
has rational parametric (homogenous) solution depending on four parameters.

\item
Let $p, m, n\in\N$ and $a, b\in \Z\setminus\{0\}$ are given and suppose that $\gcd(p,m+n)=1$.
Then the equation
\begin{equation*}
\frac{y_{1}^p-x_{1}^{m+n}}{ax_{1}^{m}}=\frac{y_{2}^p-x_{2}^{m+n}}{bx_{2}^{m}}
\end{equation*}
has rational parametric (homogenous) solution depending on four parameters.
\end{enumerate}
\end{lem}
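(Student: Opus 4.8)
The plan is to read both identities as the single requirement that the two sides share a common value $D$; that is, to find rational points simultaneously on the two $p$-twists $Dy_1^p=x_1^m(x_1+a)$, $Dy_2^p=x_2^m(x_2+b)$ in part (1), and on the analogous pair coming from $Dy_i^p=x_i^{m+n}+\text{(const)}\,x_i^m$ in part (2). Eliminating $D$ turns each claim into one equation relating $(x_1,y_1)$ to $(x_2,y_2)$, and the key device is to introduce the ratios $\sigma=x_1/x_2$ (and, in part (2), $\tau=y_1/y_2$) as free parameters. This linearizes the dependence on the remaining unknowns and lets them be solved for explicitly.

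For part (1) I first set $y_1=ky_2$ with $k$ a free parameter, so that the equation becomes $x_1^m(x_1+a)=k^p\,x_2^m(x_2+b)$; the a priori obstruction that the ratio $x_1^m(x_1+a)/\bigl(x_2^m(x_2+b)\bigr)$ be a perfect $p$-th power is then dissolved simply by declaring it equal to the free $p$-th power $k^p$. Substituting $x_1=\sigma x_2$ and cancelling $x_2^m$ reduces this to the single linear equation $\sigma^m(\sigma x_2+a)=k^p(x_2+b)$, whence
\[
x_2=\frac{k^p b-a\sigma^m}{\sigma^{m+1}-k^p},\qquad x_1=\sigma x_2,\qquad y_1=ky_2,
\]
with $y_2$ arbitrary. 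One checks directly that $x_1+a=(k^p/\sigma^m)(x_2+b)$, so the defining ratio is indeed $k^p=(y_1/y_2)^p$, giving a genuine solution whenever $\sigma^{m+1}\neq k^p$ and $x_iy_i\neq 0$, which holds off a proper closed subset of parameter space. Together with the free scaling of $(y_1,y_2)$ this is the asserted homogeneous four-parameter family.

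For part (2) write $M=m+n$ and substitute both $x_1=\sigma x_2$ and $y_1=\tau y_2$. Clearing denominators and collecting the $y_2^p$ and $x_2^M$ terms (using $M-m=n$) reduces the equation to
\[
(b\tau^p-a\sigma^m)\,y_2^{\,p}=\sigma^m(b\sigma^n-a)\,x_2^{\,M},
\]
that is, to prescribing a single ratio $y_2^p/x_2^M=R(\sigma,\tau)$ with $R=\sigma^m(b\sigma^n-a)/(b\tau^p-a\sigma^m)$. This is exactly where the hypothesis $\gcd(p,M)=1$ enters, and it is the crux of the argument: choosing integers $s,t$ with $sp-tM=1$, I realize the prescribed ratio by the monomial substitution $x_2=\mu^p R^{\,t}$, $y_2=\mu^M R^{\,s}$, where $\mu$ is a free scaling parameter, since then $y_2^p/x_2^M=R^{\,sp-tM}=R$. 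Back-substituting $x_1=\sigma x_2$, $y_1=\tau y_2$ produces explicit rational functions of $\sigma,\tau,\mu$ solving the original equation, homogeneous in the stated sense.

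The main obstacle is the one just isolated in part (2): unlike part (1), where the offending ratio could be set equal to a free $p$-th power outright, here the ratio $R$ must be representable in the mixed form $y^p/x^{M}$, and this is possible for every rational value of $R$ precisely because $p$ and $M=m+n$ are coprime. The rest is routine: one records the explicit rational functions, clears the (integer, possibly negative) powers of $R$ to display the solution with polynomial entries if desired, and verifies that the nonvanishing conditions on the various denominators and on $x_iy_i$—so that the expressions define honest points on the curves—fail only on a proper closed subset of the $(\sigma,\tau,\mu)$, respectively $(\sigma,k,y_2)$, space.
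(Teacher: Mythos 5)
Your proposal is correct and takes essentially the same route as the paper: your ratio parameters $\sigma=x_1/x_2$ and $k=y_1/y_2$ (resp.\ $\tau$) are exactly the dehomogenization of the paper's four-parameter substitution $x_1=uT,\ y_1=vT,\ x_2=wT,\ y_2=tT$, and your linear solve for $x_2$ in part (1) is the paper's linear solve for $T$ in different coordinates. In part (2) both arguments hinge on the identical B\'ezout device --- realizing the prescribed ratio $y_2^p/x_2^{m+n}$ via integers with $sp-t(m+n)=1$ (the paper builds this into the substitution $x_i=(\ast)T^{\alpha},\ y_i=(\ast)T^{\beta}$) --- so your $(\sigma,\tau,\mu)$-family coincides with the paper's up to the weighted scaling that your parameter $\mu$ makes explicit.
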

\begin{proof}
In order to prove the first part of our lemma we put
\begin{equation}\label{SUBxy}
x_{1}=uT,\quad y_{1}=vT, \quad x_{2}=wT, \quad y_{2}=tT,
\end{equation}
where $T$ need to be determined and $u, v, w, t$ are rational parameters. After this substitution and simple manipulations we left with linear equation in variable $T$ of the form
\begin{equation*}
\frac{u^{m}(uT+a)}{v^{p}}=\frac{w^{m}(wT+b)}{t^{p}}.
\end{equation*}
We thus get
\begin{equation*}
T=T(u,v,w,t)=\frac{bv^{p}w^{m}-au^{m}t^{p})}{u^{m+1}t^{p}-v^{p}w^{m+1}}.
\end{equation*}
Summing up our reasoning we see that if
\begin{equation}\label{suitableT}
D=D(u,v,w,t)=T^{m-p}\frac{u^{m}(uT+a)}{v^{p}}
\end{equation}
then the superelliptic curves $Dy^p=x^{m}(x+a)$ and $Dy^p=x^{m}(x+b)$ contains $\Q(u,v,w,t)$-rational points
 $P_{1}=(x_{1}, y_{1}), P_{2}=(x_{2},y_{2})$ respectively, where $x_{i}, y_{i}$ are given by (\ref{SUBxy}) and the expression for $T$ is given above.

Using similar idea we prove the second part of our lemma. If $\gcd(p,m+n)=1$ then there are $\alpha,\beta \in\N_{+}$ such that $p\alpha-(m+n)\beta=1$. In order to solve the equation from the statement
of our result we put
\begin{equation}\label{subxy}
x_{1}=uT^{\alpha},\quad y_{1}=wT^{\beta}, \quad x_{2}=vT^{\alpha}, \quad y_{2}=tT^{\beta},
\end{equation}
where $T$ need to be determined and $u, v, w, t$ are rational parameters.

After this substitution and simple manipulations we left with linear equation in variable $T$ of the form
\begin{equation*}
\frac{w^{p}T-u^{m+n}}{au^{m}}=\frac{t^pT-v^{m+n}}{bv^{m}}.
\end{equation*}
We thus get
\begin{equation*}
T=T(u,v,w,t)=\frac{u^{m}v^{m}(bu^{n}-av^{n})}{bv^{m}w^{p}-au^{m}t^{p}}.
\end{equation*}
Summing up our reasoning we see that if
\begin{equation*}
D=T^{\beta n}\frac{w^{p}T-u^{m+n}}{au^{n}}
\end{equation*}
then the superelliptic curves $y^p=x^{m}(x^n+aD)$ and $y^p=x^{m}(x^n+bD)$ contains $\Q(u,v,w,t)$-rational points
 $P_{1}=(x_{1}, y_{1}), P_{2}=(x_{2},y_{2})$ respectively, where $x_{i}, y_{i}$ are given by (\ref{subxy}) and the expression for $T$ is given above.
\end{proof}

Now we attempt to compute $\Q$-torsion part of the Jacobian of
$J_{m,a}$. We start with a well-known result. For convenience of the reader we outline the proof.

\begin{lem}\label{wzor}
Let C be a smooth projective curve of genus $g\geq1$ defined
over finite field $F_{q}$ and let J be its Jacobian. Set $N_{k}:=\#C(\mathbb{F}_{q^{k}})$ for $k\in N$. If $N_{k}=1+q^{k}$ for $k=1,2,...,g$
then $\#J(\mathbb{F}_{q})=1+q^{g}.$
\end{lem}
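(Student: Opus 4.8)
The statement relates the number of points on the curve $C$ over the fields $\F_{q^k}$ to the order of the Jacobian $J(\F_q)$, and the natural tool is the relationship between the zeta function of $C$ and the characteristic polynomial of Frobenius acting on $J$. The plan is to express $\#J(\F_q)$ as the value $P(1)$, where $P(t)=\prod_{i=1}^{2g}(1-\alpha_i t)$ is the numerator of the zeta function of $C/\F_q$ and $\alpha_1,\dots,\alpha_{2g}$ are the Frobenius eigenvalues (the inverse roots of $P$). This is the standard fact $\#J(\F_q)=P(1)=\prod_{i=1}^{2g}(1-\alpha_i)$, which follows because $J(\F_q)$ is the kernel of $\op{Frob}-1$ on $J$ and its order equals the determinant of $\op{Frob}-1$.

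First I would write the point-count data in terms of the power sums of the Frobenius eigenvalues. The Weil formula gives
\begin{equation*}
N_k=\#C(\F_{q^k})=1+q^k-\sum_{i=1}^{2g}\alpha_i^k,
\end{equation*}
so the hypothesis $N_k=1+q^k$ for $k=1,\dots,g$ is exactly the statement that the power sums $s_k:=\sum_{i=1}^{2g}\alpha_i^k$ vanish for $k=1,\dots,g$. The goal is then to deduce the value of $P(1)=\prod_{i=1}^{2g}(1-\alpha_i)$ from the vanishing of these first $g$ power sums together with the structural constraints on $P$.

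The key structural input is the functional equation for the zeta function, equivalently the symmetry $\alpha_i\alpha_{\sigma(i)}=q$ pairing up the eigenvalues, which forces $P(t)=\sum_{j=0}^{2g}a_j t^j$ to satisfy $a_{2g-j}=q^{g-j}a_j$; in particular $a_0=1$, $a_{2g}=q^g$, and the middle coefficients are determined by the lower half. Now I would use Newton's identities, which relate the elementary symmetric functions $a_1,\dots,a_g$ (equivalently $e_1,\dots,e_g$) to the power sums $s_1,\dots,s_g$. Since $s_1=\dots=s_g=0$, Newton's identities give recursively $e_1=\dots=e_g=0$, hence $a_1=\dots=a_g=0$. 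Feeding this into the functional equation, every coefficient $a_j$ with $0<j<2g$ vanishes as well, so $P(t)=1+q^g t^{2g}$, and therefore
\begin{equation*}
\#J(\F_q)=P(1)=1+q^g,
\end{equation*}
which is the claim.

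\textbf{Main obstacle.} The routine computations (Newton's identities, checking the functional-equation symmetry propagates the vanishing to the upper half of the coefficients) are mechanical. The genuinely essential ingredient, which I would cite rather than reprove, is the identity $\#J(\F_q)=P(1)$ together with the Weil conjectures for curves over finite fields (rationality, functional equation, and the shape $N_k=1+q^k-\sum\alpha_i^k$); establishing these from scratch would be the hard part, but they are standard and the problem is clearly intended to be solved assuming them. Thus the only real work is the symmetric-function bookkeeping showing that $g$ vanishing power sums, combined with the functional equation, pin down $P$ completely.
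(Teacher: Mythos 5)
Your proposal is correct and follows essentially the same route as the paper: both arguments combine the Weil formula $N_k=1+q^k-\sum_i\alpha_i^k$ with the identity $\#J(\F_q)=P(1)$, use Newton's identities to convert the vanishing of the first $g$ power sums into the vanishing of the first $g$ elementary symmetric functions, and then invoke the functional equation $P(T)=q^gT^{2g}P\left(\frac{1}{qT}\right)$ to handle the upper-half coefficients. The only cosmetic difference is that you conclude the full shape $P(t)=1+q^gt^{2g}$ before evaluating at $t=1$, whereas the paper evaluates $P(1)$ directly via a formula expressing it through the lower-half symmetric functions; the content is identical.
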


\begin{proof}
It is known (see for example \cite[Excercise A.8.11]{HS}) that
\begin{equation}\label{cq}
\#C\left(  \mathbb{F}_{q^{k}}\right)  =q^{k}+1-\left(  \alpha_{1}^{k}+...+\alpha_{2g}^{k}\right),
\end{equation}
where the polynomial $P(T)=\prod_{i=1}^{2g}(1-\alpha_{i}T) \in\Z[T]$ satisfies $P(T)=q^{g}T^{2g}P\left(\frac{1}{qT}\right)$. Then
\begin{equation*}
\#J(\mathbb{F}_{q})=P(1)=\prod_{i=1}^{2g}(1-\alpha_{i}),
\end{equation*}
therefore
\begin{equation}\label{jq}
\#J\left(  \mathbb{F}_{q}\right)  =\left(  -1\right)  ^{g}s_{g}+\sum
_{i=0}^{g-1}\left(  -1\right)  ^{i}\left(  1+q^{g-i}\right)  s_{i}\text{,}
\end{equation}
\ where $s_{i}=s_{i}\left(  \alpha_{1},...,\alpha_{2g}\right)  $ denote the
$i$-th fundamental symmetric polynomial (by definition $s_{0}:=1$). Let
$t_{i}=t_{i}\left(  \alpha_{1},...,\alpha_{2g}\right)  :=$\ $\alpha_{1}%
^{i}+...+\alpha_{2g}^{i}$ be the $i$-th Newton polynomial. Since by assumption
$t_{k}\left(  \alpha_{1},...,\alpha_{2g}\right)  =0$, for $k=1,...,g$, using
the Newton formulas
\begin{equation}\label{nf}
ks_{k}=\sum_{i=1}^{k}(-1)  ^{i-1}t_{i}s_{k-i}
\end{equation}
we get $s_{k}=0$, for $k=1,...,g$. Hence by (\ref{jq}), we are done.
\end{proof}

\begin{prop}\label{L5}
We have\ $\Z/p\Z\subset J_{m,a}(\Q)_{tors}\subset\Z/2p\Z.$
\end{prop}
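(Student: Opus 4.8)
The plan is to prove the two inclusions separately: the lower bound by exhibiting an explicit rational torsion point, and the upper bound by counting points modulo carefully chosen primes and feeding the counts into Lemma \ref{wzor}. Throughout I may assume, as reduced above, that $1\leq m\leq p-2$. Then the degree-$p$ map $\pi\colon C_{m,a}\to\mathbb{P}^1$ defined by $x$ is branched precisely over the three $\Q$-rational values $x=0$, $x=-a$ and $x=\infty$, at each of which $\pi$ is totally ramified (since $\gcd(m,p)=\gcd(1,p)=\gcd(m+1,p)=1$), so that each branch point carries a single rational point $P_{0}=(0,0)$, $P_{-a}=(-a,0)$, $\infty$; a Riemann--Hurwitz count with these three totally ramified fibres then recovers the genus $g:=(p-1)/2$.

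For the lower bound $\Z/p\Z\subset J_{m,a}(\Q)_{tors}$ I would use that total ramification over $x=0$ gives $\op{div}(x)=p\,(P_{0})-p\,(\infty)=p((P_{0})-(\infty))$. Hence the class $D_{0}:=[(P_{0})-(\infty)]\in J_{m,a}(\Q)$ satisfies $pD_{0}=0$, and it is nonzero because $(P_{0})\sim(\infty)$ would force $C_{m,a}$ to be rational, contradicting $g\geq 1$. As $p$ is prime, $D_{0}$ has order exactly $p$, so $\Z/p\Z\cong\langle D_{0}\rangle\subset J_{m,a}(\Q)_{tors}$. (The relation $\op{div}(y)=m\,(P_{0})+(P_{-a})-(m+1)(\infty)$ gives $mD_{0}+[(P_{-a})-(\infty)]=0$, so the obvious rational classes generate no more than this $\Z/p\Z$.)

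For the upper bound I would reduce modulo primes and bound $n:=\#J_{m,a}(\Q)_{tors}$. Fix a prime $\ell\geq 3$ of good reduction (excluding the finitely many primes dividing $2pa$ and the discriminant) that is a primitive root modulo $p$. For $1\leq k\leq g$ we have $\ell^{k}\not\equiv 1\pmod p$, so $z\mapsto z^{p}$ is a bijection of $\F_{\ell^{k}}$; working on the smooth model, over each of the $\ell^{k}+1$ points of $\mathbb{P}^1(\F_{\ell^{k}})$ there then lies exactly one point of $C_{m,a}$ (a single rational point over each branch point, and the unique $p$-th root elsewhere), giving $\#C_{m,a}(\F_{\ell^{k}})=1+\ell^{k}$. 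Lemma \ref{wzor} yields $\#J_{m,a}(\F_{\ell})=1+\ell^{g}$. Since $\ell\geq 3$ has good reduction, the reduction map $J_{m,a}(\Q)_{tors}\hookrightarrow J_{m,a}(\F_{\ell})$ is injective, so $n\mid 1+\ell^{g}$.

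Finally I would show $n\mid 2p$ and conclude. As $\ell$ is a primitive root, Euler's criterion gives $\ell^{g}\equiv -1\pmod p$, so $p\mid 1+\ell^{g}$, while $\ell$ odd gives $2\mid 1+\ell^{g}$. To remove every other prime power I would realize suitable congruences by primes of good reduction via Dirichlet's theorem: for an odd prime $r\neq p$, taking $\ell\equiv 1\pmod r$ forces $1+\ell^{g}\equiv 2\pmod r$, hence $r\nmid n$; taking $\ell\equiv 1\pmod 4$ forces $1+\ell^{g}\equiv 2\pmod 4$, hence $4\nmid n$; and taking $\ell$ congruent modulo $p^{2}$ to a primitive root of $p^{2}$ (whose $g$-th power has order $2p\neq 2$ modulo $p^{2}$, so is $\not\equiv -1$) gives $p^{2}\nmid 1+\ell^{g}$, hence $p^{2}\nmid n$. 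These divisibilities force $n\mid 2p$, and combined with $p\mid n$ and the fact that $J_{m,a}(\Q)_{tors}$ is abelian this leaves only $\Z/p\Z$ or $\Z/2p\Z$, proving the claim. I expect the last step to be the main obstacle: justifying the finite-field count $\#C_{m,a}(\F_{\ell^{k}})=1+\ell^{k}$ on the smooth model and, above all, trimming the torsion order down to exactly $2p$, where excluding $p^{2}$ requires passing to primitive roots modulo $p^{2}$ and using Dirichlet to produce actual primes of good reduction in the needed residue classes.
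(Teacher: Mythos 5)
Your proof is correct and follows essentially the same route as the paper's: the order-$p$ class $[(0,0)-(\infty)]$ with $p\,D_0=\op{div}(x)$ for the lower bound, and for the upper bound the count $\#C_{m,a}(\F_{\ell^k})=1+\ell^k$ for $\ell$ a primitive root modulo $p$, Lemma \ref{wzor}, injectivity of reduction on torsion, and Dirichlet-chosen congruence classes to exclude odd primes $r\neq p$, the factor $4$, and $p^2$. Your write-up is in fact slightly more careful than the paper's in two spots it leaves implicit: verifying the point count on the smooth model at the branch points and at infinity, and explicitly producing a prime with $\ell^{(p-1)/2}\not\equiv-1\pmod{p^2}$ via primitive roots modulo $p^2$.
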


\begin{proof}
First we will show that $J_{m,a}(\Q)_{tors}\subset\Z/2p\Z$. Observe that
\begin{equation}\label{cq1}
\#C_{m,a}\left(  \mathbb{F}_{l^{n}}\right)  =l^{n}+1\text{, if }p\nmid
l^{n}-1\text{.}
\end{equation}
Indeed, the map $x\longmapsto x^{p}$ is one-to-one on $\mathbb{F}_{l^{n}}$. If
$l$ is the primitive root modulo $p$ then the formula (\ref{cq1}) holds for $n=1$,
$2$, ..., $(p-1)/2$. Hence by Lemma \ref{wzor}, we obtain $\#J_{a,1,m}\left(  \mathbb{F}%
_{l}\right)  =l^{\left(  p-1\right)  /2}+1$. For sufficiently large primes $l$
(say $l>c:=c(p,m,a)$) the reduction modulo $l$ homomorphism induces an
embedding $J_{m,a}(\Q)_{tors}\hookrightarrow J_{m,a}(\mathbb{F}_{l})  $, therefore
\begin{equation}
\#J_{m,a}\left(  \Q\right)  _{tors}|\#J_{m,a}\left(  \mathbb{F}%
_{l}\right)  .
\end{equation}
Take a prime $q\nmid 2p.$ We will show that $J_{m,a}(\Q)$ has no $q$-torsion. Choose a prime $l>c$ such that $l$ is a
primitive root modulo $p$ and $l\equiv1\pmod{q}.$ Then $l^{\frac{p-1}{2}}+1\equiv2\pmod{q}$ hence $q\nmid\#J_{m,a}(\mathbb{F}_{l})  $.
Now we can deduce bounds for $2$-torsion and $p$-torsion. Taking a prime $l>c$ such that $l$ is a primitive root modulo $p$ and
$l\equiv1\pmod{4}$ we have $4\nmid\#J_{m,a}(\mathbb{F}_{l})$. Similarly, taking a prime $l>c$ such that
$l$ is a primitive root modulo $p$ and $l^{\frac{p-1}{2}}\not\equiv-1\pmod{p^2}$ we obtain that $p^{2}\nmid\#J_{m,a}(  \mathbb{F}_{l})  $.

Now observe that the divisor $D=((0,0))-(\infty)$ is rational and represents the point of order $p$ in $J_{a,1,m}(\Q)$.
Indeed, $D$ is not principal but $pD=\op{div}(x)$ and the assertion follows.
\end{proof}

\bigskip

Problem of existence of $2$-torsion in $J_{m,a}(\mathbb{Q})$ is more
complicated. Gross and Rohlich \cite{GR} showed that the Jacobian of the curve $y^{p}=x^{m}(1-x)^{k}$ has a
$\Q$-rational point of order 2 if and only if $p=7$ and $m^{3}\equiv k^{3}\equiv-(m+k)^{3}\pmod{7}$. We
give the condition which will be sufficient for non-existence of such point in the
Jacobian $J^{a,m,k}$ of the curve
\begin{equation*}
C^{a,m,k}:y^{p}=x^{m}(a-x)^{k},
\end{equation*}
where $0<m,k$ and $m+k<p$. Moreover it is enough (because of a birational equivalence)
to consider only $m=1$ and $0<k<p-1$ (c.f. \cite[p. 207]{GR}).
Note that (after obvious change of variables) $C^{a,m,1}=C_{m,a}$ and
$C^{1,m,k}$ is the Gross and Rohlich curve. First, we compute the zeta
function of the curve $C^{a,m,k}$. To this aim we introduce some notations.
Let $l$ be a prime of good reduction of the curve $C^{a,m,k}$, i.e. $l\neq p$
and $l\nmid a$. If $\frak{l}$ is a prime in $K=\Q(\zeta_{p})$ lying above $l$, let $\chi_{\frak{l}}$ be the $p$-power residue
symbol modulo $\frak{l}$, i.e. the character on
$(\Z[\zeta_{p}]/\frak{l})^{\times}$, with
values in $\mu_{p}$, given by
\begin{equation*}
\chi_{\frak{l}}\left(  \alpha\right)  =\zeta_{p}^{k}\Longleftrightarrow
\alpha^{\frac{N\frak{l}-1}{p}}\equiv\zeta_{p}^{k}\pmod{\frak{l}}.
\end{equation*}

Consider the Jacobi sum
\begin{equation*}
\tau_{m,k}(\frak{l}):=-\sum_{\alpha\in(\Z[\zeta_{p}]/\frak{l})  \setminus\{0,1\}}\chi_{\frak{l}}^{m}(\alpha)\chi_{\frak{l}}^{k}(1-\alpha).
\end{equation*}
Note that $\tau_{m,k}(\frak{l})\in\Z[\zeta_{p}]$ and has an absolute value $N(\frak{l})^{1/2}$ in
any complex embedding. Then we have the following formula for the zeta function.

\begin{lem} Let
$Z(C^{a,m,k},\mathbb{F}_{l},T)$ be the zeta function of the curve $C^{a,m,k}$
over $\mathbb{F}_{l}$. Then
\begin{equation*}
Z(C^{a,m,k},\mathbb{F}_{l},T)=\frac{P_{l}(T)}{(1-T)(1-lT)},
\end{equation*}
where
\begin{equation*}
P_{l}(T)=\prod_{\frak{l}\mid l}(1-\chi_{\frak{l}}^{m+k}(a)\tau_{m,k}(\frak{l}) T^{f})
\end{equation*}
and the product is taken over all primes in $\Z[\zeta_{p}]$ lying above $l$ and $f$ is the multiplicative order of
$l\pmod{p}$.
\end{lem}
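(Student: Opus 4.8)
The plan is to read off the zeta function from its definition by counting $\#C^{a,m,k}(\mathbb{F}_{l^n})$ for all $n$ and matching the counts against the Weil shape $Z=P_l(T)/((1-T)(1-lT))$ with $P_l(T)=\prod_{i=1}^{2g}(1-\alpha_i T)$ and $\#C^{a,m,k}(\mathbb{F}_{l^n})=l^n+1-\sum_i\alpha_i^n$, as in (\ref{cq}). Since $0<m+k<p$ and $p$ is prime, $\gcd(p,m+k)=1$, so the smooth model has a single rational point at infinity over every $\mathbb{F}_{l^n}$; it therefore suffices to count affine solutions of $y^p=x^m(a-x)^k$. I would first dispose of the extensions with $f\nmid n$: there $p\nmid l^n-1$, the $p$-power map is a bijection of $\mathbb{F}_{l^n}$, each $x$ gives exactly one $y$, and $\#C^{a,m,k}(\mathbb{F}_{l^n})=l^n+1$. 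This already agrees with the claimed $P_l$, because the reciprocal roots produced by a factor $1-\beta_{\frak l}T^f$ are the $f$-th roots of $\beta_{\frak l}$, and their $n$-th power sum vanishes precisely when $f\nmid n$.

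The substantive case is $n=fs$, where $p\mid l^f-1$ and the residue field $\Z[\zeta_p]/\frak l\cong\mathbb{F}_{l^f}$ carries the order-$p$ character $\chi_{\frak l}$. Writing the number of $p$-th roots of a nonzero $c$ as $\sum_{j=0}^{p-1}\chi^j(c)$ for a chosen order-$p$ character $\chi$ of $\mathbb{F}_{l^{fs}}^\times$ and substituting $x=a\alpha$, the affine count becomes
\[
l^{fs}+\sum_{j=1}^{p-1}\chi^{j(m+k)}(a)\,J(\chi^{jm},\chi^{jk}),\qquad J(\lambda,\nu):=\sum_{\alpha\neq 0,1}\lambda(\alpha)\nu(1-\alpha),
\]
the $j=0$ contribution absorbing the $l^{fs}-2$ values $x\neq 0,a$ together with the two zeros of $f$. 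I would take $\chi=\chi_{\frak l}\circ N$ for the norm $N\colon\mathbb{F}_{l^{fs}}^\times\to\mathbb{F}_{l^f}^\times$, so that $\chi$ still has order $p$ and $\chi(a)=\chi_{\frak l}(a)^s$. The Hasse--Davenport relation then writes each $J(\chi^{jm},\chi^{jk})$ as $(-1)^{s+1}$ times the $s$-th power of the corresponding Jacobi sum over $\mathbb{F}_{l^f}$; the three characters $\chi_{\frak l}^{jm},\chi_{\frak l}^{jk},\chi_{\frak l}^{j(m+k)}$ are all nontrivial for $j\neq 0$ because $0<m,k,m+k<p$, which is exactly the hypothesis making the relation applicable. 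Using $J(\chi_{\frak l}^{jm},\chi_{\frak l}^{jk})=-\tau_{jm,jk}(\frak l)$ the $j$-term collapses to $-\bigl(\chi_{\frak l}^{j(m+k)}(a)\tau_{jm,jk}(\frak l)\bigr)^s$, so the affine contribution equals $-\sum_{j=1}^{p-1}\bigl(\chi_{\frak l}^{j(m+k)}(a)\tau_{jm,jk}(\frak l)\bigr)^s$.

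It then remains to convert this sum over exponents $j$ into the product over primes $\frak l\mid l$. The key point is that each summand is invariant under $j\mapsto lj\pmod p$: since $a\in\mathbb{F}_l$ we have $\chi_{\frak l}^{lj(m+k)}(a)=\chi_{\frak l}^{j(m+k)}(a)$, while the Frobenius $t\mapsto t^l$ of $\mathbb{F}_{l^f}$ is additive and bijective, so the substitution $\alpha\mapsto\alpha^l$ gives $\tau_{ljm,ljk}(\frak l)=\tau_{jm,jk}(\frak l)$. As $l$ has order $f$ modulo $p$, the orbits of $\langle l\rangle$ on $(\Z/p\Z)^\times$ all have length $f$ and number $(p-1)/f$, and under the standard correspondence between these Frobenius orbits and the primes of $\Z[\zeta_p]$ above $l$ each orbit contributes $f$ equal terms. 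Hence the affine contribution is $-f\sum_{\frak l\mid l}\beta_{\frak l}^s$ with $\beta_{\frak l}:=\chi_{\frak l}^{m+k}(a)\tau_{m,k}(\frak l)$; adding the point at infinity gives $\#C^{a,m,k}(\mathbb{F}_{l^{fs}})=l^{fs}+1-f\sum_{\frak l}\beta_{\frak l}^s$. Comparing with $\sum_i\alpha_i^{fs}$ for the eigenvalue system $\{\alpha:\alpha^f=\beta_{\frak l}\}_{\frak l\mid l}$ (of total size $p-1=2g$, consistent with the genus) identifies the Frobenius eigenvalues and yields $P_l(T)=\prod_{\frak l\mid l}(1-\beta_{\frak l}T^f)$. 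The step I expect to be the main obstacle is this final bookkeeping: matching the character-indexed sum to the prime-indexed product through the Frobenius-orbit correspondence, while tracking the Hasse--Davenport signs carefully enough that the eigenvalues group into the factors $1-\beta_{\frak l}T^f$ rather than into separate linear factors.
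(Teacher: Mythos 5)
Your proposal is correct and takes essentially the same route as the paper: both count $\#C^{a,m,k}(\mathbb{F}_{q})$ via order-$p$ multiplicative characters, reduce through the substitution $x=a\alpha$ to the Jacobi sums $\chi^{m+k}(a)J(\chi^{m},\chi^{k})$, and then pass to the stated product over primes $\frak{l}\mid l$ via the Davenport--Hasse results. The only difference is one of detail: the paper compresses the final step into the citation of \cite{DH}, whereas you spell out the Hasse--Davenport lifting (via norm-composed characters) and the Frobenius-orbit bookkeeping identifying the $(p-1)/f$ orbits of $j\mapsto lj$ with the primes above $l$, all of which is carried out correctly.
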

\begin{proof} As in the proof of Lemma \ref{wzor} we have $P_{l}(T)
=\prod_{i=1}^{p-1}(1-\alpha_{i}T)  $ where $\alpha_{1}^{n}+\ldots+\alpha_{p-1}^{n}=l^{n}+1-\#C^{a,m,k}(\mathbb{F}_{l^{n}})$ and $P_{l}(T)=l^{(p-1)/2}T^{p-1}P_{l}(1/lT)$. Hence it is enough to consider only
$n=1,2,...,\frac{p-1}{2}$. Let $q=l^{n}$ for $1\leq n\leq\frac{p-1}{2}$. If
$p\nmid q-1$ then by (\ref{cq1}) we get $\#C^{a,m,k}(\mathbb{F}_{q})=1+q$. Assume now that $q\equiv1\pmod{p}$. We obtain
\begin{align*}
\#C^{a,m,k}(\mathbb{F}_{q})& =1+\#\{(x,y)\in\mathbb{F}_{q}\times\mathbb{F}_{q}:\;y^{p}=x^{m}(a-x)^{k}\text{and} x=0,a\}  \\
&  +\#\{(x,y)  \in\mathbb{F}_{q}\times\mathbb{F}_{q}:\;y^{p}=x^{m}(a-x)^{k}\text{and} x\neq0,a\}  \\
&  =1+\sum_{x\in\{0,a\}}1+\sum_{x\in \mathbb{F}_{q}\setminus\{0,a\}}(1+\sum_{\chi^{p}=1,\chi\neq 1}\chi(x^{m}(a-x)^{k}))  \\
&  =1+q+\sum_{\chi^{p}=1,\chi\neq 1}\sum_{x\in\mathbb{F}_{q}\setminus\{0,a\}}\chi^{m}(x)  \chi^{k}(a-x)  \\
&  =1+q+\sum_{\chi^{p}=1,\chi\neq 1}\chi^{m+k}(a)\sum_{x\in\mathbb{F}_{q}\setminus\{0,1\}}\chi^{m}(x)\chi^{k}(1-x)  \\
&  =1+q+\sum_{\chi^{p}=1,\chi\neq 1}\chi^{m+k}(a)  J(\chi^{m},\chi^{k}),
\end{align*}
where $J(\chi,\phi):=\sum_{x\in\mathbb{F}_{q}\setminus\{0,1\}}\chi(x)\phi(1-x)$ is the Jacobi sum. Therefore $\alpha_{1}^{n}+...+\alpha_{p-1}^{n}=0$, if
$f\nmid n$, and $\alpha_{1}^{n}+...+\alpha_{p-1}^{n}=-\sum_{\chi^{p}=1, \chi\neq 1}\chi^{m+k}(a)J(\chi^{m},\chi^{k})$, if $f\mid n$.
 Now the assertion follows from the classical results due to Davenport and Hasse \cite{DH}.
\end{proof}

\begin{lem} \label{L6}
If $\Z/2\Z\subset
J^{a,m,k}(\Q)$  then $(\Z/2\Z)^{p-1}\subset J^{a,m,k}(K)  $.
\end{lem}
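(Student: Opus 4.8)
The plan is to exploit the complex multiplication of $J:=J^{a,m,k}$ by $\mathcal{O}:=\Z[\zeta_p]$, coming from the automorphism $(x,y)\mapsto(x,\zeta_p y)$, together with the crucial fact that this action is defined over $K=\Q(\zeta_p)$. Set $R:=\mathcal{O}/2\mathcal{O}$. Since $J$ has CM by the maximal order $\mathcal{O}$ and $\dim J=(p-1)/2=\tfrac12[\mathcal{O}:\Z]$, a standard consequence of CM by the full ring of integers is that the $2$-adic Tate module $T_2J$ is free of rank one over $\mathcal{O}\otimes\Z_2$; reducing modulo $2$ shows that $J[2]$ is a free $R$-module of rank one. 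As $2$ is unramified in $K$ (only $p$ ramifies), $R\cong\prod_{\frak l\mid 2}\F_{\frak l}$, a product of copies of $\F_{2^f}$ with $f=\op{ord}_p(2)$, and indeed $\#J[2]=2^{p-1}$. The decisive structural feature is that the endomorphisms in $\mathcal{O}$ are defined over $K$, so $\op{Gal}(\overline{\Q}/K)$ acts on $J[2]$ by $R$-module automorphisms, while $\op{Gal}(K/\Q)\cong(\Z/p\Z)^{\times}$ acts $\delta$-semilinearly, permuting the idempotents of $R$, equivalently the primes $\frak l\mid 2$.

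Let $P\in J^{a,m,k}(\Q)$ be the given nonzero point of order $2$, and set $M:=\mathcal{O}\cdot P\subseteq J[2]$, the $R$-submodule it generates. Because $P$ is $K$-rational and every element of $\mathcal{O}$ is a $K$-endomorphism, each generator $\zeta_p^{i}P$ of $M$ is $K$-rational, so $M\subseteq J^{a,m,k}(K)$. Under the identification $J[2]\cong R\cong\prod_{\frak l\mid 2}\F_{\frak l}$ the submodule $M$ equals $\prod_{\frak l\in S}\F_{\frak l}$, where $S$ is the support of $P$, i.e.\ the set of factors in which the corresponding component of $P$ is nonzero (each $\F_{\frak l}$ being a field). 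Hence it suffices to prove that $S$ exhausts all factors: then $P$ is a unit of $R$, so $M=J[2]$, and the desired inclusion $J[2]=(\Z/2\Z)^{p-1}\subseteq J^{a,m,k}(K)$ follows at once.

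To show $S$ is everything I would use that $P$ is $\Q$-rational, not merely $K$-rational. For $g\in\op{Gal}(\overline{\Q}/\Q)$ lying over $\delta\in\op{Gal}(K/\Q)$ one has $g(r\cdot Q)=\delta(r)\cdot g(Q)$ for $r\in\mathcal{O}$, and $g(P)=P$. Writing $P=\sum_{\frak l}e_{\frak l}P$ with $e_{\frak l}$ the idempotents of $R$, semilinearity gives $g(e_{\frak l}P)=\delta(e_{\frak l})g(P)=e_{\delta(\frak l)}P$, so $g$ carries the $\frak l$-component of $P$ to the $\delta(\frak l)$-component; since $g$ restricts to an additive isomorphism $\F_{\frak l}\to\F_{\delta(\frak l)}$, vanishing of one component forces vanishing of the other, and therefore $S$ is stable under $\op{Gal}(K/\Q)$. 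As $K/\Q$ is Galois, $\op{Gal}(K/\Q)$ acts transitively on the primes $\frak l\mid 2$, and $S\neq\emptyset$ because $P\neq 0$; hence $S$ is the full set of factors, completing the argument. The main work is thus the two structural inputs—freeness of $J[2]$ of rank one over $R$, which rests on the CM being by the maximal order, and the transitive permutation of the primes above $2$ by $\op{Gal}(K/\Q)$—after which $\Q$-rationality propagates the nonvanishing of $P$ across all factors and forces all of $J[2]$ to be $K$-rational.
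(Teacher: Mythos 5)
Your proof is correct and is essentially the paper's own: the authors prove this lemma simply by invoking, \emph{mutatis mutandis}, Lemma 1.3 of Gross--Rohrlich \cite{GR}, and that proof is precisely your argument --- $J[2]$ is free of rank one over $\Z[\zeta_p]/2\Z[\zeta_p]$ (a product of fields, as $2$ is unramified), the Galois group $\op{Gal}(K/\Q)$ acts semilinearly and permutes the primes above $2$ transitively, so the support of a $\Q$-rational point of order $2$ is full, whence it generates all of $J[2]$ inside $J(K)$. You have merely written out in detail the argument the paper cites by reference.
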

\begin{proof} Repeat \emph{mutatis mutandis} the proof of Lemma 1.3 from \cite{GR}.
\end{proof}

\begin{prop} \label{p2}
If $a$ is an odd integer and $p\neq7$ then $J^{a,m,k}(\Q)$ has no point of order 2.
\end{prop}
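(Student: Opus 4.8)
The plan is to argue by contradiction, combining Lemma~\ref{L6} with the complex multiplication on $J^{a,m,k}$ and the point-count formula of the preceding lemma. So suppose that $J^{a,m,k}(\Q)$ contains a point of order $2$. By Lemma~\ref{L6} the entire $2$-torsion is then rational over $K=\Q(\zeta_{p})$, i.e. $J^{a,m,k}[2]\subset J^{a,m,k}(K)$; since $\dim J^{a,m,k}=(p-1)/2$ this says that all $2^{p-1}$ of these points are $K$-rational. The assumption that $a$ is odd enters already here: it guarantees that $2$ is a prime of good reduction of $C^{a,m,k}$ (we have $2\neq p$ and $2\nmid a$), which is what makes the $2$-adic part of the argument meaningful and what eventually distinguishes the odd case from the even one.

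First I would reformulate $K$-rationality of the $2$-torsion as a congruence for Jacobi sums. Over $K$ the Jacobian has complex multiplication by $\mathcal{O}=\Z[\zeta_{p}]$, and since $2$ is unramified in $K$ the $2$-adic Tate module is free of rank one over $\mathcal{O}\otimes\Z_{2}$; hence $J^{a,m,k}[2]\cong\mathcal{O}/2\mathcal{O}$ as an $\mathcal{O}$-module, and the action of $G_{K}$ is $\mathcal{O}$-linear, i.e. given by an abelian representation $G_{K}\to(\mathcal{O}/2\mathcal{O})^{\times}$. For a prime $\frak{l}$ of $K$ of good reduction above a rational prime $l\equiv1\pmod p$ (so $f=1$) the Frobenius acts as multiplication by the Frobenius eigenvalue furnished by the preceding lemma, namely $\beta_{\frak{l}}=\chi_{\frak{l}}^{m+k}(a)\,\tau_{m,k}(\frak{l})$. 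Therefore $J^{a,m,k}[2]\subset J^{a,m,k}(K)$ is equivalent to $\op{Frob}_{\frak{l}}$ acting trivially for every such $\frak{l}$, that is to
\begin{equation*}
\chi_{\frak{l}}^{m+k}(a)\,\tau_{m,k}(\frak{l})\equiv1\pmod{2\mathcal{O}}\quad\text{for all good }\frak{l}\text{ with }N\frak{l}\equiv1\!\!\pmod p,
\end{equation*}
and in particular $2^{p-1}\mid\#J^{a,m,k}(\F_{l})=P_{l}(1)$ for every good $l\equiv1\pmod p$.

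The remaining task is to show that this family of congruences cannot hold when $p\neq7$. By Chebotarev's theorem it suffices to exhibit a single good prime $\frak{l}$ for which $\beta_{\frak{l}}\not\equiv1\pmod{2\mathcal{O}}$. To evaluate $\beta_{\frak{l}}$ modulo $2$ I would use the explicit sum defining $\tau_{m,k}(\frak{l})$: writing $\beta_{\frak{l}}\equiv\sum_{j}N_{j}\zeta_{p}^{j}\pmod2$, where $N_{j}$ counts the $x\neq0,a$ with $x^{m}(a-x)^{k}$ in the $j$-th class of $p$-th powers, the condition $\beta_{\frak{l}}\equiv1\pmod2$ becomes the parity statement that exactly one prescribed $N_{j}$ is odd and all the others are even. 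These counts are constrained by the $S_{3}$-symmetry of the three branch points $0,a,\infty$ of $C^{a,m,k}$, which permutes the triple of local exponents $(m,k,-(m+k)\bmod p)$, and one checks, exactly as in \cite{GR}, that the parity condition can persist over all such $\frak{l}$ only when $p=7$ (together with $m^{3}\equiv k^{3}\equiv-(m+k)^{3}\pmod7$). Since we have assumed $p\neq7$, this yields the desired contradiction, and hence $J^{a,m,k}(\Q)$ has no point of order $2$.

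I expect the last step to be the crux: the genuine arithmetic input is the evaluation of the (twisted) Jacobi sums modulo $2$ and the verification that the resulting parity conditions are incompatible with $p\neq7$, which is precisely the computation carried out by Gross and Rohlich for $a=1$ and which must here be run with the extra factor $\chi_{\frak{l}}^{m+k}(a)$ present. It is in this $2$-adic analysis that the hypothesis $a$ odd is indispensable: good reduction at $2$ is what guarantees that the relevant Frobenius eigenvalues are exactly the Jacobi sums above and that no spurious $2$-torsion is introduced by bad reduction at $2$, whereas for even $a$ the reduction at $2$ degenerates and additional rational $2$-torsion may in fact appear.
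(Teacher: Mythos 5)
Your opening reductions are partly in step with the paper: arguing by contradiction and invoking Lemma~\ref{L6} to get $J^{a,m,k}[2]\subset J^{a,m,k}(K)$ is exactly how the paper begins, and your CM reformulation --- $J^{a,m,k}[2]\cong\Z[\zeta_{p}]/2\Z[\zeta_{p}]$ with $G_{K}$ acting through a character, so that rationality of the full $2$-torsion over $K$ amounts to the congruences $\chi_{\frak{l}}^{m+k}(a)\tau_{m,k}(\frak{l})\equiv 1\pmod{2\Z[\zeta_{p}]}$ at all good degree-one primes $\frak{l}$ --- is correct as far as it goes. The gap is the crux step, which you defer rather than prove: the assertion that these congruences ``can persist only when $p=7$,'' checked via parities of the counts $N_{j}$ and the $S_{3}$-symmetry of the branch points ``exactly as in \cite{GR},'' does not correspond to any computation in \cite{GR}. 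Gross and Rohlich's input is $2$-adic, not a parity count at split primes: their Lemma 1.4 says that full $2$-torsion over an unramified extension of $\Q_{2}$, together with good reduction at $2$, forces the Jacobian to be ordinary over $\F_{2}$, and their Lemma 1.5 determines (by valuations of Jacobi sums at the prime $2$) that $J^{1,m,k}$ is ordinary at $2$ precisely when $p=7$ and $m^{3}\equiv k^{3}\equiv-(m+k)^{3}\pmod{7}$. You give no argument for your parity claim, and it carries the entire arithmetic content of the proposition.

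Worse, the deferred step is false as you have framed it, because your congruence system never uses the hypothesis that $a$ is odd: the Jacobi-sum description of Frobenius at $\frak{l}\nmid 2ap$ and the character argument are valid for every nonzero $a$. Take $a=2$, $m=k=1$ and any odd prime $p$: by Proposition~\ref{p3} the group $J^{2,1,1}(\Q)$ has a point of order $2$ (in the hyperelliptic model $y^{2}=x^{p}+4^{p}$ one sees the root $x=-4$), so by Lemma~\ref{L6} its full $2$-torsion is $K$-rational and all of your congruences hold simultaneously with $p\neq 7$. Hence no argument that sees $a$ only through $\chi_{\frak{l}}(a)$ at odd primes can reach a contradiction; the oddness of $a$ must enter $2$-adically. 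This is exactly what the paper's proof does: $a$ odd gives good reduction of $C^{a,m,k}$ at $2$, and $(\Z/2\Z)^{p-1}\subset J^{a,m,k}(K_{\frak{l}})$ for $\frak{l}\mid 2$ with $K_{\frak{l}}/\Q_{2}$ unramified forces $J^{a,m,k}$ to be ordinary over $\F_{2}$ by \cite[Lemma 1.4]{GR}; since $\chi_{\frak{l}}(a)$ is a root of unity, hence a unit, the Newton polygon of $P_{l}$ is the same as in the untwisted case, so by Deligne's criterion $J^{a,m,k}$ is ordinary at $2$ if and only if $J^{1,m,k}$ is, which by \cite[Lemma 1.5]{GR} happens only for $p=7$ with the cubic congruence. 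Your closing paragraph misidentifies where $a$ odd is used: it is not needed to validate the Frobenius eigenvalues at odd primes, nor to exclude ``spurious torsion'' in the character argument; it is needed so that the ordinariness criterion at the prime $2$ applies at all. Repairing your proof would mean converting your split-prime congruences back into information at $2$ --- in effect, rediscovering the ordinariness argument of the paper.
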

\begin{proof}Suppose, on the contrary, that
$J^{a,m,k}(\Q)$ has a point of order 2. Let $\frak{l}$
be a prime of $K$ dividing 2. By Lemma \ref{L6}, we get $(\Z/2\Z)^{p-1}\subset J^{a,m,k}(K_{\frak{l}})$
where $K_{\frak{l}}$ is the completion of $K$ at the place $\frak{l}$. But
$K_{\frak{l}}$ is an unramified extension of $\Q_{2}$, hence by
\cite[Lemma 1.4]{GR}, $J^{a,m,k}$ must be ordinary over $\mathbb{F}_{2}$. We know that (see \cite{De}) an abelian
variety is ordinary over $\mathbb{F}_{l}$ if and only if all slopes in the
Newton polygon of the characteristic polynomial for its Frobenius endomorphism
are either 0 or 1. This polynomial is a reciprocal of $P_{l}$ - the numerator
of the zeta function. Since $\chi_{\frak{l}}(a)$ is a root of unity, the roots of the numerators of the zeta functions of the curves $C^{a,m,k}$ and $C^{1,m,k}$ have the same $l$-adic valuation, and consequently corresponding Newton polygons have the same slopes too. Therefore
$J^{a,m,k}$ is ordinary over $\mathbb{F}_{2}$ (note that $C^{a,m,k}$ has good
reduction at 2 because $a$ is odd) if and only if $J^{1,m,k}$ is ordinary
over $\mathbb{F}_{2}$. By \cite[Lemma 1.5]{GR}, the last condition is equivalent to $p=7$ and $m^{3}\equiv
k^{3}\equiv-(m+k)^{3}\pmod{7}$. This contradicts our assumption.
\end{proof}

In case of even $a$ the group $J^{a,m,k}(\Q)$ may have a point of
order 2. We start with some preliminary result.

\begin{lem} \label{L7}
 The curves $C^{a,1,k}$ are hyperelliptic for $k=1,(p-1)/2,p-2$.
\end{lem}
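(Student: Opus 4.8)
The plan is to prove hyperellipticity for each of the three values of $k$ by exhibiting an explicit morphism of degree $2$ from $C^{a,1,k}:\;y^{p}=x(a-x)^{k}$ onto $\mathbb{P}^{1}$. Recall that $C^{a,1,k}$ has genus $(p-1)/2$ and that a curve of genus $\geq 2$ is hyperelliptic precisely when its function field contains a rational subfield of index $2$. I would look for this subfield in the form $\Q(Y)$, where $Y=y^{c}/(x^{i}(a-x)^{j})$ is a monomial in $y$ and the two linear forms $x,a-x$, chosen so that $Y^{p}$, after using the defining relation $y^{p}=x(a-x)^{k}$, becomes a rational function of $x$ of degree exactly $2$. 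Once $Y^{p}=g(x)$ with $\deg g=2$, the coordinate $x$ satisfies a quadratic equation over $\Q(Y)$, so $[\Q(C^{a,1,k}):\Q(Y)]\leq 2$ and the map $Y$ has degree $2$.

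Concretely, I expect the following choices to work. For $k=1$ take $Y=y$; then $Y^{p}=x(a-x)$ and $x^{2}-ax+Y^{p}=0$. For $k=p-2$ take $Y=y/(a-x)$; a short computation using $y^{p}=x(a-x)^{p-2}$ gives $Y^{p}=x/(a-x)^{2}$, whence $Y^{p}x^{2}-(2aY^{p}+1)x+a^{2}Y^{p}=0$. For $k=(p-1)/2$ take $Y=y^{2}/(a-x)$; here $Y^{p}=(y^{p})^{2}/(a-x)^{p}=x^{2}(a-x)^{p-1}/(a-x)^{p}=x^{2}/(a-x)$, so $x^{2}+Y^{p}x-aY^{p}=0$. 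In each case $g(x)$ has exactly two poles on $\mathbb{P}^{1}_{x}$ (counted with multiplicity), which is the structural reason for $\deg g=2$; this reflects the fact that for these three values of $k$ the local monodromy multiset $\{1,k,-(1+k)\}\bmod p$ equals $\{1,1,p-2\}$ up to scaling by a unit, which is the type that forces a double cover.

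Two points then remain to be checked. First, I must verify $\Q(x,Y)=\Q(x,y)$, so that $Y$ really is a function on $C^{a,1,k}$ generating the full function field together with $x$; this is immediate for $k=1$ and $k=p-2$, where $y$ is visibly a rational expression in $x$ and $Y$, and for $k=(p-1)/2$ it follows from $\gcd(2,p)=1$: writing $1=2s+pt$ recovers $y=(y^{2})^{s}(y^{p})^{t}=(Y(a-x))^{s}(x(a-x)^{(p-1)/2})^{t}\in\Q(x,Y)$. Second, I must rule out that the degree collapses to $1$; since $\Q(Y)$ is a rational function field while $C^{a,1,k}$ has positive genus, the inclusion $\Q(Y)\subsetneq\Q(C^{a,1,k})$ is proper, forcing degree exactly $2$ (for $p=5$ the claim is anyway automatic, as every genus $2$ curve is hyperelliptic, so the genuine content begins at $p=7$). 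I expect the only real obstacle to be the design of $Y$ for $k=(p-1)/2$: choosing the exponent $c=2$ rather than $c=1$ in $Y=y^{2}/(a-x)$ is precisely the exponent rescaling needed to bring the monodromy type of $C^{a,1,(p-1)/2}$ into the shape realizable by an integral monomial in $y,x,a-x$; once the correct $Y$ is found, everything else is a routine verification.
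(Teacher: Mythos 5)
Your proof is correct, and all three key computations check out: for $k=1$, $Y=y$ gives $Y^{p}=x(a-x)$; for $k=p-2$, $Y=y/(a-x)$ gives $Y^{p}=x(a-x)^{p-2}/(a-x)^{p}=x/(a-x)^{2}$; and for $k=(p-1)/2$, $Y=y^{2}/(a-x)$ gives $Y^{p}=x^{2}(a-x)^{p-1}/(a-x)^{p}=x^{2}/(a-x)$. In each case $x$ is quadratic over $\Q(Y)$, you correctly recover $y\in\Q(x,Y)$ (via $1=2s+pt$ in the middle case), and the genus argument (a degree-$1$ inclusion would make the function field rational) rules out collapse, so $\Q(C^{a,1,k})$ contains a rational subfield of index exactly $2$. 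However, your route is organized genuinely differently from the paper's. The paper first reduces the case $k=(p-1)/2$ to $k=p-2$ by an explicit birational map $F(x,y)=\left(a-\frac{y^{p}}{(a-x)^{(p-1)/2}},\frac{y^{p-2}}{(a-x)^{(p-3)/2}}\right)$ with explicit inverse, and then, for $k=1$ and $k=p-2$, writes down concrete substitutions transforming the curves into the models $y^{2}=x^{p}+4^{p-1}a^{2}$ and $y^{2}=x^{p}+(4a)^{p-1}$. Your uniform field-theoretic treatment is arguably cleaner as a proof of the bare statement --- in particular the direct choice $Y=y^{2}/(a-x)$ handles $k=(p-1)/2$ without any birational reduction --- but it delivers only abstract hyperellipticity, whereas the paper's explicit Weierstrass-type models, with their explicit constants, are exactly what Proposition \ref{p3} consumes via \cite[Lemma 4.3]{J} to decide when $J^{a,1,k}(\Q)$ has a point of order $2$ (the answers $a=2$ and $a=2^{p-2}$ are read off from the constants $4^{p-1}a^{2}$ and $(4a)^{p-1}$). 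At bottom the two arguments are the same computation: completing the square in your quadratic $x^{2}-ax+Y^{p}=0$ gives $(2x-a)^{2}=a^{2}-4Y^{p}$, and the rescaling $Y=-X/4$, $2^{p-1}(2x-a)=y$ recovers the paper's model $y^{2}=X^{p}+4^{p-1}a^{2}$; so one extra normalization step would let your proof also supply the explicit models needed downstream.
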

\begin{proof} First, we show that $C^{a,1,(p-1)/2}$ and $C^{a,1,p-2}$
are birationally equivalent. Indeed, consider the function
\begin{equation*}
F\left(  x,y\right)  =\left(  a-\frac{y^{p}}{\left(  a-x\right)  ^{\frac
{p-1}{2}}},\frac{y^{p-2}}{\left(  a-x\right)  ^{\frac{p-3}{2}}}\right).
\end{equation*}
Then $F(C^{a,1,(p-1)/2})\subset C^{a,1,p-2}$ and
its inverse has the form
\begin{equation*}
G\left(  x,y\right)  =\left(  a-\frac{y^{p}}{\left(  a-x\right)  ^{p-2}}%
,\frac{y^{\frac{p-1}{2}}}{\left(  a-x\right)  ^{\frac{p-3}{2}}}\right)  .
\end{equation*}

Now it suffices to show that $C^{a,1,1}$ and $C^{a,1,p-2}$ are hyperelliptic.
Substituting $(x,y) \longmapsto\left(  \frac{y}{2^{p}}+\frac
{a}{2},-\frac{x}{4}\right) $ into the equation $y^{p}=x(a-x)  $ we
get $y^{2}=x^{p}+4^{p-1}a^{2}$. Similarly, substituting
\begin{equation*}
\left(  x,y\right)  \longmapsto\left(  a-\left(  \frac{4a}{x}\right)
^{p}\left(  \frac{y}{2^{p}a^{\frac{p-1}{2}}}-\frac{1}{2}\right)  ,\left(
\frac{4a}{x}\right)  ^{p-1}\left(  \frac{y}{2^{p}a^{\frac{p-1}{2}}}-\frac
{1}{2}\right)  \right)
\end{equation*}
into the equation $y^{p}=x(a-x)^{p-2}$ we obtain $y^{2}=x^{p}+(4a)^{p-1}$, and the assertion follows.
\end{proof}

\begin{prop}\label{p3}
The group $J^{a,1,1}(\Q)$ has a point of
order 2 if and only if $a=2$. The groups $J^{a,1,(p-1)/2}(\Q)$ and $J^{a,1,p-2}(\Q)  $
have a point of order 2 if and only if $a=2^{p-2}$ (note that without loss of
generality $a$ is $p$-th power-free integer).
\end{prop}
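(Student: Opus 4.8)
The plan is to reduce each assertion to a question about the $2$-torsion of the Jacobian of an explicit hyperelliptic model and then to translate that question into elementary divisibility conditions on $a$. First I would invoke Lemma \ref{L7}: the substitutions recorded there identify $J^{a,1,1}$ with the Jacobian of $H_{1}:\;y^{2}=x^{p}+4^{p-1}a^{2}$, while the birational equivalence $C^{a,1,(p-1)/2}\simeq C^{a,1,p-2}$ together with the second substitution identifies both $J^{a,1,(p-1)/2}$ and $J^{a,1,p-2}$ with the Jacobian of $H_{2}:\;y^{2}=x^{p}+(4a)^{p-1}$. Since all these maps are defined over $\Q$, the existence of a $\Q$-rational point of order $2$ is preserved. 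Moreover, applying $x\mapsto -x$ on $C^{a,1,k}$ gives $C^{-a,1,k}\simeq C^{a,1,k}$ for $k=1,p-2$, so we may assume $a>0$ (and, as stated, $p$-th power free). Everything then rests on the following \emph{Claim}: for $B\in\Q^{\ast}$ the Jacobian $J_{B}$ of the curve $y^{2}=x^{p}+B$ satisfies $J_{B}(\Q)[2]\neq 0$ if and only if $B$ is a $p$-th power in $\Q$.

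To prove the Claim I would use the standard Galois-module description of $J_{B}[2]$. The polynomial $x^{p}+B$ is separable (its only possible multiple root is $0$, which is not a root), so $H_{B}:\;y^{2}=x^{p}+B$ is smooth of genus $(p-1)/2$, with finite Weierstrass points $(\theta_{1},0),\dots,(\theta_{p},0)$ (the $\theta_{i}$ running over the roots of $x^{p}+B$) and the single point $\infty$. Writing $e_{i}=[(\theta_{i},0)-(\infty)]$, each $e_{i}$ is $2$-torsion since $2e_{i}=[\op{div}(x-\theta_{i})]=0$, and the only relation among them is $\sum_{i}e_{i}=[\op{div}(y)]=0$; hence $J_{B}[2]\cong\F_{2}^{p}/\langle(1,\dots,1)\rangle$ as a $\op{Gal}(\overline{\Q}/\Q)$-module, the action being permutation of the $\theta_{i}$. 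A class represented by a subset $I\subseteq\{\theta_{1},\dots,\theta_{p}\}$ is therefore $\Q$-rational exactly when $\sigma(I)\in\{I,I^{c}\}$ for every $\sigma$, and nonzero exactly when $I\notin\{\emptyset,\{\theta_{1},\dots,\theta_{p}\}\}$.

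I then split into the two directions. If $B$ is not a $p$-th power then, $p$ being prime, the binomial $x^{p}+B$ is irreducible over $\Q$, so the Galois action on the roots is transitive. Given any proper nonempty $I$, choose $\theta_{i}\in I$, $\theta_{j}\notin I$ and $\sigma$ with $\sigma(\theta_{i})=\theta_{j}$; then $\sigma(I)\neq I$, whereas $\sigma(I)=I^{c}$ would force $|I|=|I^{c}|=p/2$, impossible for odd $p$. Hence the only fixed class is $0$, i.e. $J_{B}(\Q)[2]=0$. Conversely, if $B=\gamma^{p}$ with $\gamma\in\Q$, then $\theta=-\gamma$ is a rational root, $I=\{\theta\}$ is Galois-stable and proper, and $e=[(-\gamma,0)-(\infty)]$ is a nonzero rational point of order $2$ (nonzero since $(-\gamma,0)\sim(\infty)$ would force genus $0$). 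This establishes the Claim, and it is here — pinning down the exact $2$-torsion module and running the transitivity argument — that the only genuine content lies; the remaining steps are mechanical. (For $k=1$ this is also consistent with the bound $J^{a,1,1}(\Q)_{\mathrm{tors}}\subset\Z/2p\Z$ of Proposition \ref{L5}, since $C^{a,1,1}=C_{1,a}$.)

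It remains to decide when $B$ is a $p$-th power by comparing $q$-adic valuations. For $H_{1}$ we have $B=4^{p-1}a^{2}=2^{2(p-1)}a^{2}$: the condition $p\mid 2v_{q}(a)$ forces $v_{q}(a)=0$ for every odd prime $q$ (as $0\le v_{q}(a)\le p-1$), while $p\mid 2(p-1)+2v_{2}(a)=2\bigl(p-1+v_{2}(a)\bigr)$ forces $v_{2}(a)\equiv 1\pmod p$, i.e. $v_{2}(a)=1$; thus $B\in\Q^{\ast p}$ iff $a=2$. For $H_{2}$ we have $B=(4a)^{p-1}=2^{2(p-1)}a^{p-1}$, so $v_{2}(B)=(p-1)\bigl(2+v_{2}(a)\bigr)$ and $v_{q}(B)=(p-1)v_{q}(a)$; the same reasoning gives $v_{q}(a)=0$ for odd $q$ and $p\mid 2+v_{2}(a)$, i.e. $v_{2}(a)=p-2$, whence $B\in\Q^{\ast p}$ iff $a=2^{p-2}$. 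Combining these computations with the Claim and Lemma \ref{L7} yields the proposition, the coincidence of the conditions for $J^{a,1,(p-1)/2}(\Q)$ and $J^{a,1,p-2}(\Q)$ being immediate from their common model $H_{2}$.
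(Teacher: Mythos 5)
Your proposal is correct, and it is more self-contained than the paper's own argument, which consists of exactly two ingredients: the reduction via Lemma \ref{L7} to the hyperelliptic models $y^{2}=x^{p}+4^{p-1}a^{2}$ and $y^{2}=x^{p}+(4a)^{p-1}$ (which you reproduce), and then a citation of \cite[Lemma 4.3]{J} for the criterion deciding when the Jacobian of such a curve has rational $2$-torsion. Your ``Claim'' is precisely the content outsourced to that citation, and your proof of it is sound: for an odd-degree hyperelliptic curve the identification $J_{B}[2]\cong\F_{2}^{p}/\langle(1,\dots,1)\rangle$ as a Galois module is standard, the option $\sigma(I)=I^{c}$ is correctly killed by the parity of $p$ (in fact, since $|I|\neq|I^{c}|$ for every $I$ when $p$ is odd, this case never arises, so your argument could be shortened: rational classes correspond exactly to Galois-stable subsets, i.e.\ to even-degree rational factors of $x^{p}+B$), and the transitivity input rests on Capelli's theorem that $x^{p}-c$ is irreducible over $\Q$ for prime $p$ iff $c\notin\Q^{*p}$ --- note you are implicitly using that $-B\in\Q^{*p}\Leftrightarrow B\in\Q^{*p}$ because $p$ is odd, which is fine but worth stating. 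Your valuation bookkeeping ($4^{p-1}a^{2}\in\Q^{*p}$ iff $v_{2}(a)=1$ and $v_{q}(a)=0$ for odd $q$; $(4a)^{p-1}\in\Q^{*p}$ iff $v_{2}(a)=p-2$ and $v_{q}(a)=0$, using $\gcd(p,p-1)=1$) is correct. One genuine improvement over the paper: since $-1=(-1)^{p}$ is a $p$-th power, the normalization ``$a$ is $p$-th power-free'' does \emph{not} fix the sign of $a$, so the valuation argument alone yields $a=\pm2$ (resp.\ $a=\pm2^{p-2}$); your explicit isomorphism $C^{a,1,k}\simeq C^{-a,1,k}$ via $(x,y)\mapsto(-x,y)$ is exactly what is needed to make the statement ``iff $a=2$'' literally correct, a point the paper leaves implicit. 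In short: same skeleton, but you prove the key lemma rather than cite it, at the cost of length and with the benefit of making the sign convention and the $2$-torsion mechanism transparent.
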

\begin{proof} Follows from Lemma \ref{L7} and \cite[Lemma 4.3]{J}.
\end{proof}

Now we are ready to prove the main result of this section.

\begin{thm}\label{ptwists}
Let $a,b$ be odd nonzero integers. Consider the superelliptic curves
\begin{equation}\label{supcurves1}
C_{1}:\;y^{p}=x^{m}(x+a),\quad C_{2}:\;y^{p}=x^{m}(x+b),\quad
\end{equation} where $p$ is an odd prime $\neq7$ and $0<m<p-1$. Then there exists a rational function $D_{p,p}\in\Z(u,v,w,t)$ such that the
Jacobians of the $p$-twists of the curves $C_{1}, C_{2}$  by $D_{p,p}(u,v,w,t)$ have positive rank over the
field $\Q(u,v,w,t)$.
\end{thm}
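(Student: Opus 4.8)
The plan is to combine the explicit uniformization of Lemma~\ref{T5}(1) with the torsion computation of Propositions~\ref{L5} and~\ref{p2}, following the mechanism of the hyperelliptic theorems \ref{twist2mm}--\ref{twistmm2m}. First I would apply Lemma~\ref{T5}(1) verbatim: it manufactures the rational function $D_{p,p}=D(u,v,w,t)$ of (\ref{suitableT}) together with the points $P_{1}=(uT,vT)$ and $P_{2}=(wT,tT)$, where $T=T(u,v,w,t)$ is as in that lemma, lying respectively on the $p$-twists
\begin{equation*}
C_{1}':\;D_{p,p}y^{p}=x^{m}(x+a),\qquad C_{2}':\;D_{p,p}y^{p}=x^{m}(x+b).
\end{equation*}
Since all four coordinates lie in $\Q(u,v,w,t)$, these are honest $\Q(u,v,w,t)$-rational points, and off a proper closed subset of the parameter space they are non-constant and satisfy the non-degeneracy conditions ($P_{i}\neq\infty$, nonzero coordinates). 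Exactly as in the earlier proofs, $P_{i}$ determines a $\Q(u,v,w,t)$-rational class $\delta_{i}:=[(P_{i})-(\infty)]\in \op{Jac}(C_{i}')(\Q(u,v,w,t))$, so the theorem reduces to showing that $\delta_{1}$ and $\delta_{2}$ are of infinite order.

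Supplying a substitute for \cite[Proposition~2.1]{JeToUl} (which covered only the hyperelliptic shape $y^{2}=x^{m}+h(t)$) is the heart of the matter, and this is where the torsion results enter. Because $a$ and $b$ are odd, $p\neq7$, and $0<m<p-1$, the pair $(m,1)$ satisfies $m+1<p$, so Proposition~\ref{p2} applies with $k=1$; recalling $C^{a,m,1}=C_{m,a}$ it gives that neither $J_{m,a}(\Q)$ nor $J_{m,b}(\Q)$ has a point of order $2$. Together with Proposition~\ref{L5} this pins the torsion down exactly:
\begin{equation*}
J_{m,a}(\Q)_{tors}=J_{m,b}(\Q)_{tors}=\Z/p\Z,
\end{equation*}
each generated by $[((0,0))-(\infty)]$, with $p\cdot[((0,0))-(\infty)]=\op{div}(x)$.

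To turn ``rational'' into ``infinite order'' I would argue by specialization. Since specialization of divisor classes is a group homomorphism, if $\delta_{1}$ were torsion then so would be its image $\delta_{1}^{(0)}$ at every good rational specialization $(u,v,w,t)\mapsto(u_{0},v_{0},w_{0},t_{0})$; it therefore suffices to produce one specialization at which $\delta_{1}^{(0)}$ is non-torsion in $\op{Jac}(C_{1}'^{(0)})(\Q)$. For this I need the torsion of the specialized (twisted) Jacobian, and the key observation is that the estimate of Proposition~\ref{L5} is insensitive to the $p$-twist: its proof uses only that $x\mapsto x^{p}$ is a bijection of $\F_{l^{n}}$ when $p\nmid l^{n}-1$, which forces $\#C(\F_{l^{n}})=l^{n}+1$ equally for $C_{m,a}$ and for any of its good $p$-twists, while the ordinariness argument of Proposition~\ref{p2} merely multiplies the Frobenius eigenvalues by the root of unity $\chi_{\frak{l}}(D_{0})$ and so does not change their $l$-adic valuations. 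Hence at a good specialization with $D_{0}:=D_{p,p}(u_{0},v_{0},w_{0},t_{0})$ odd and nonzero one still has $\op{Jac}(C_{1}'^{(0)})(\Q)_{tors}=\Z/p\Z$, generated by $[((0,0))-(\infty)]$. It then remains only to choose the specialization so that $\delta_{1}^{(0)}$ avoids this finite subgroup, a single non-vanishing condition on the parameters, whence $\delta_{1}$ has infinite order in the generic fibre. Replacing $a$ by $b$ treats $\delta_{2}$, and both Jacobians acquire positive rank over $\Q(u,v,w,t)$.

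The step I expect to be the main obstacle is precisely the passage from the $\Q$-torsion of the constant curve $C_{m,a}$ to the (generic) torsion of the twisted Jacobian $\op{Jac}(C_{i}')$: one must check that the point-counting and Newton--polygon inputs to Propositions~\ref{L5} and~\ref{p2} genuinely survive the $p$-twist, and that some good specialization sends $\delta_{i}$ \emph{outside} the finite order-$p$ subgroup rather than into it. Once the torsion is known to be no larger than $\Z/p\Z$ on the relevant fibres, the non-triviality of $\delta_{i}$ becomes a checkable non-vanishing statement, which is exactly why the explicit torsion determination of this section, and not merely a dimension count, is the indispensable ingredient.
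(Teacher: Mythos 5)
Your proposal is correct and takes essentially the same route as the paper: the paper's proof likewise takes $D_{p,p}$ and the points $P_{1},P_{2}$ directly from Lemma \ref{T5}(1) and then deduces the infinite order of $(P_{i})-(\infty)$ from Propositions \ref{L5} and \ref{p2} in a single sentence. Your specialization argument, together with the observation that the point-count and Newton--polygon inputs survive the $p$-twist (with $D_{0}$ odd, so that good reduction at $2$ and Proposition \ref{p2} still apply), is exactly the content the paper leaves implicit in that deduction.
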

\begin{proof}
Let $D_{p,p}:=D(u,v,w,t)$, where $D$ is given by the expression (\ref{suitableT}) obtained in first part of Lemma \ref{T5}. Next, from Propositions \ref{L5} and \ref{p2}, we observe that the $\Q(u,v,w,t)$-rational divisors $D_{1}=(P_{1})-(\infty)$ and $D_{2}=(P_{2})-(\infty)$, where the points $P_{1}$, $P_{2}$ have coordinates given by (\ref{SUBxy}), are of infinite order in Jacobians of $p$-twists of the curves $C_{1}$, $C_{2}$ by $D_{p,p}$, respectively. This proves the theorem.
\end{proof}

\section{Open questions and conjectures}\label{Section6}

In this section we propose some open questions and conjectures which are natural in the contents of our work.

Our first question is related to the existence of simultaneous  quadratic, $m$-twist and
$2m$-twist of the curves $C_{1}, C_{2}, C_{3}$ given by (\ref{hypcurves}) such that the Jacobians of the twisted curves have all positive rank. We were trying to prove such a result, however, without success. This
leads us to the following.

\begin{ques}\label{ques1}
Let $f\in\Q[x]$ be without multiple roots and let $m$ be an odd positive integer, let $b,c\in\Z$ and consider the hyperelliptic curves
{\rm(\ref{hypcurves})}. Is it possible to find a rational function $D\in\Q(u_{1},\ldots,u_{k})$ for some $k$ such that the Jacobian of the quadratic twist of $C_{1}$, the Jacobian of the $m$-twist of $C_{2}$ and the Jacobian of the $2m$-twist of the curve $C_{3}$
by $d$ have all positive rank over $\Q(u_{1},\ldots,u_{k})$?
\end{ques}

It is clear that this question will have positive answer provided we will be able to find rational parametric solutions
of the following system of equations
\begin{equation*}
\frac{f(x_{1})}{y_{1}^2}=\frac{y_{2}^2-b}{x_{2}^m}=\frac{y_{3}^2-x_{3}^m}{c}.
\end{equation*}

We observe that the example constructed in \cite[Example 5.2]{Ul3} can be generalized. More precisely, we present the following:

\begin{exam}
{\rm Let us take $b=c=1$. We show that the set the above system has a parametric solution for each $f\in\Q[x]\setminus\Q$ and an odd positive integer $m$.
In order to prove this we define the rational function in two variables
\begin{equation*}
D(u,v)=f(u)\left(\frac{v^2-f(u)}{2v}\right)^{2(m-1)}.
\end{equation*}
The definition of this function allows us to find the following points on corresponding curves which are quadratic, $m$-twist and $2m$-twist of the curve $C_{1}, C_{2}, C_{3}$ respectively:
\begin{equation*}
\begin{array}{ll}
  C^{1}:\;D(u,v)y^2=f(x), &   P=\left(u,\;p(u,v)^{-2m}\right), \\
  C^{2}:\;y^2=x^m+D(u,v)^{m-1}, &   Q=\left(f(u)p(u,v)^{2(m-2)},\;f(u)^{n}p(u,v)^{2n(m-1)-1}q\right), \\
  C^{3}:\;y^2=x^m+D(u,v), &    R=\left(\frac{1}{p(u,v)^2},\;p(u,v)^{2m}q(u,v)\right),
\end{array}
\end{equation*}
where
\begin{equation*}
p(u,v)=\frac{v^2-f(u)}{2v},\quad q(u,v)=\frac{v^2+f(u)}{2v}.
\end{equation*}
Using the same argument as at the end of the proof of Theorem \ref{twist2mm} we easily deduce that $(P_{i})-(\infty)$ is of infinite order in the Jacobian of $C^{i}$.
}
\end{exam}

After having obtained the example above we hoped that it is possible to get a positive answer to the Question \ref{ques1} at least in the case  when $f(x)=x^{m}+a$ and $b=c=a$. Unfortunately, we were unable to prove such a result. However, the computer experiments we performed suggest that for any given $a\in\Z\setminus\{0\}$ and an odd positive integer $m$ it is possible to find values of $D\in\Q$ which have demanding property.
We believe that it can be done in any case and this impression leads us to the following:

\begin{conj}\label{conj2}
Let $a\in\Z\setminus\{0\}$ and consider the hyperelliptic curve
$C:\;y^2=x^m+a$. Then the set of $d\in\Q$ such that
the Jacobians of the quadratic, $m$-twist and $2m$-twist of the curve $C$ by $d$ have positive rank is infinite.
\end{conj}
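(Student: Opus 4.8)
The plan is to first collapse all three twists into a single family, and only then confront the simultaneity. The key observation is that, because $m$ is odd, each of the three twists of $C:\;y^2=x^m+a$ by $d$ is in fact $\Q$-isomorphic to a member of the $2m$-twist family $y^2=x^m+c$. Indeed, applying $x\mapsto dx,\ y\mapsto d^{(m-1)/2}y$ to the quadratic twist $dy^2=x^m+a$ gives $y^2=x^m+ad^{-m}$; applying $x\mapsto dx,\ y\mapsto d^{(m+1)/2}y$ to the $m$-twist $y^2=dx^m+a$ gives $y^2=x^m+ad^{-(m+1)}$ (both exponents are integers since $m$ is odd); and the $2m$-twist is $y^2=x^m+ad$ itself. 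Since $y^2=x^m+c$ depends on $c$ only modulo $(\Q^{*})^{2m}$ (via $x\mapsto\lambda^2x,\ y\mapsto\lambda^m y$), we have $ad^{-m}\equiv ad^{m}$ and $ad^{-(m+1)}\equiv ad^{m-1}$, so the three twisted Jacobians are $\Q$-isomorphic to those of
\begin{equation*}
y^2=x^m+ad,\qquad y^2=x^m+ad^{m-1},\qquad y^2=x^m+ad^{m}.
\end{equation*}
Thus the conjecture is equivalent to producing infinitely many $d\in\Q^{*}$ for which these three specific $2m$-twists all have positive rank.

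With this reduction in hand, I would seek a single rational function $d=d(t)\in\Q(t)$ together with non-constant points $(x_i(t),y_i(t))$ on the three curves above. By \cite[Proposition 2.1]{JeToUl} each such point produces a divisor of infinite order, so all three Jacobians would have positive rank over $\Q(t)$. Silverman's specialization theorem then guarantees that for all but finitely many $t_0\in\Q$ the specialized divisors remain non-torsion, so the three specialized Jacobians retain rank $\geq 1$; since a non-constant $d(t)$ takes infinitely many distinct rational values, this yields the infinitely many $d$ demanded by the conjecture. Concretely, eliminating $d$ via $d=(y_1^2-x_1^m)/a$ from the first curve turns the requirement into the two further equations
\begin{equation*}
y_2^2-x_2^m=\frac{(y_1^2-x_1^m)^{m-1}}{a^{m-2}},\qquad y_3^2-x_3^m=\frac{(y_1^2-x_1^m)^{m}}{a^{m-1}},
\end{equation*}
and the task becomes to rationally parametrize this system.

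The step I expect to be the genuine obstacle is exactly this simultaneous parametrization. In the proofs of Theorems \ref{twist2mm}, \ref{twist22m2m} and \ref{twistmm2m} the method succeeds because two of the three ``common-value'' ratios combine into a single conic carrying an evident $\Q$-rational point at infinity, which is then parametrized, while the third ratio is solved separately and freely. Here the three constants $ad,\ ad^{m-1},\ ad^{m}$ are all locked to powers of the single quantity $d$, so they cannot be treated as independent, and the conic trick no longer decouples the system. This is precisely the difficulty underlying the open Question \ref{ques1}: one can parametrize any two of the three twists, but not all three at once. A partial route around the obstruction would be to parametrize only two of the curves (for instance by adapting the construction of Theorem \ref{twistmm2m} after the reduction), thereby obtaining a one-parameter family $d(t)$ along which two of the three ranks are already positive, and then to prove that the remaining $2m$-twist family, specialized along $d(t)$, still has positive rank for infinitely many $t_0$. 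This last assertion is a ``positive rank infinitely often in a one-parameter twist family'' statement; establishing it — whether by exhibiting an auxiliary curve of positive Mordell--Weil rank whose rational points furnish admissible values of $d$, or by an analytic root-number argument — is where the real work, and the principal risk of failure, lies.
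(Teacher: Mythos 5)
You are attempting a statement that the paper does not prove: it is Conjecture \ref{conj2}, stated in Section \ref{Section6} precisely because the authors could not establish it. They say explicitly that they hoped to answer Question \ref{ques1} in the case $f(x)=x^m+a$, $b=c=a$ (which is equivalent to your statement) and were unable to, and the conjecture is supported only by computer experiments. So there is no paper argument to compare against; the only question is whether your attempt closes the problem, and by your own admission it does not.

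That said, your opening reduction is correct and genuinely clarifying. Since $m$ is odd, the substitutions you write down are $\Q$-isomorphisms: $dy^2=x^m+a$ becomes $y^2=x^m+ad^{-m}$, the $m$-twist $y^2=dx^m+a$ becomes $y^2=x^m+ad^{-(m+1)}$, and modulo $2m$-th powers (the scaling $x\mapsto\lambda^2x$, $y\mapsto\lambda^m y$) the three Jacobians are those of the $2m$-twists by $ad^{m}$, $ad^{m-1}$ and $ad$. This reformulation makes visible exactly why the method of Theorems \ref{twist2mm}, \ref{twist22m2m} and \ref{twistmm2m} fails here: in those proofs two of the three ratios combine into a conic with an evident rational point while the third is solved independently, whereas after your reduction the three constants are powers of one $d$ and nothing decouples. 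But everything after that is a research plan, not a proof. The simultaneous parametrization of your displayed system is the open content of the conjecture itself, and your fallback --- parametrize two of the curves along some $d(t)$ and prove the third family has positive rank for infinitely many specializations --- is an open problem of comparable depth: a root-number argument for these Jacobians (genus $(m-1)/2$, not elliptic curves once $m\geq 5$) would rest on the unproven parity conjecture and would control at best analytic rank, while the ``auxiliary curve of positive rank'' is precisely the object whose existence is obstructed by the locking of the parameters. The honest assessment is that you have produced a correct and useful equivalent reformulation together with an accurate diagnosis of the obstruction, but the conjecture remains exactly as open as the paper leaves it.
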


The last combination of three twists which we considered was three
$m$-twists. Unfortunately, also in this case we are unable to get
a general result. However, we believe that the following is true.

\begin{conj}\label{conj3}
Let $a,b,c\in\Z\setminus\{0\}$ and consider the hyperelliptic curves
{\rm(\ref{hypcurves1})}. Then the set of those $d\in\Q$ such that the Jacobian of the $m$-twist of the curve $C_{i}$ by $d$ have
positive rank for $i=1,2,3,$ is infinite.
\end{conj}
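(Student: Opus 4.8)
The plan is to recast the statement as a search for rational points and then to amplify a single solution into infinitely many $d$ by scaling and specialization. A value $d$ for which the $m$-twist $y^2=dx^m+a_i$ of the $i$-th curve (with $a_1=a$, $a_2=b$, $a_3=c$) carries an affine rational point $(x_i,y_i)$ is, exactly as in the proofs of Theorems \ref{twist2mm}--\ref{twistmm2m}, a common value of the three fractions in
\begin{equation*}
\frac{y_1^2-a}{x_1^m}=\frac{y_2^2-b}{x_2^m}=\frac{y_3^2-c}{x_3^m}=d,
\end{equation*}
where we insist on a \emph{non-degenerate} solution, i.e. $x_iy_i\neq 0$, $y_i^2\neq a_i$ and $d\neq 0$. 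Thus the conjecture amounts to producing, for every $a,b,c\in\Z\setminus\{0\}$, infinitely many such $d$ together with points of infinite order in the three Jacobians.

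The crucial observation is that a \emph{single} non-degenerate rational solution already suffices. Suppose $(x_i^0,y_i^0)_{i=1,2,3}$ is such a solution with common value $d_0\neq 0$. Introduce a parameter $\nu$ and set $x_i(\nu)=\nu x_i^0$ and $y_i(\nu)=y_i^0$; putting $d(\nu)=d_0\nu^{-m}$ one checks directly that $d(\nu)x_i(\nu)^m+a_i=d_0(x_i^0)^m+a_i=(y_i^0)^2$, so $(\nu x_i^0,y_i^0)$ lies on the twist $y^2=d(\nu)x^m+a_i$ for every $i$. Writing each twist in the form $Y^2=X^m+a_id(\nu)^{m-1}$ via $X=d(\nu)x$, $Y=d(\nu)^{(m-1)/2}y$, the second coordinate $a_id(\nu)^{m-1}=a_id_0^{m-1}\nu^{-m(m-1)}$ is non-constant and the image of our point is non-constant in $\nu$ (its $X$-coordinate equals $d_0x_i^0\nu^{-(m-1)}$, and $m\geq 3$), so by \cite[Proposition 2.1]{JeToUl} the divisors $(P_i)-(\infty)$ have infinite order over $\Q(\nu)$. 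By Silverman's specialization theorem (see, e.g. \cite{HS}) all but finitely many $\nu_0\in\Q^{*}$ preserve infinite order of all three divisors, whence the three $m$-twists by $d=d_0\nu_0^{-m}$ have positive rank over $\Q$; as these values are infinite in number, the conjecture follows for the triple $(a,b,c)$.

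It therefore remains only to construct \emph{one} non-degenerate solution of the displayed system for each $(a,b,c)$. The structured attempt is to put $x_1=x_2=x_3$, which forces $y_1^2-a=y_2^2-b=y_3^2-c$; eliminating $y_2,y_3$ through $y_1^2-y_2^2=a-b$ and $y_1^2-y_3^2=a-c$ and parametrizing the first conic by $y_1=\tfrac12\bigl(\theta+(a-b)/\theta\bigr)$ reduces the remaining condition to a rational point on
\begin{equation*}
E_{a,b,c}\colon\quad Y^2=\theta^4+(4c-2a-2b)\theta^2+(a-b)^2,
\end{equation*}
a curve of genus $1$ when $a,b,c$ are pairwise distinct. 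When two of $a,b,c$ coincide the quartic degenerates (it becomes $\theta^2(\theta^2+4(c-a))$ when $a=b$, and a perfect square when $a=c$ or $b=c$), so $E_{a,b,c}$ has genus $0$ and infinitely many rational points; a non-degenerate solution then exists and the argument above completes the proof in that case. The genuinely difficult case is that of pairwise distinct $a,b,c$.

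The main obstacle is guaranteeing a non-degenerate rational point for \emph{every} triple of pairwise distinct constants. The evident rational points of $E_{a,b,c}$---the two points at infinity and $(\theta,Y)=(0,\pm(a-b))$---all correspond to $y_1=\infty$ and hence are degenerate, so one needs a further rational point, which exists automatically only when $E_{a,b,c}$ has positive rank or suitable torsion, neither of which is forced. Releasing the constraint $x_1=x_2=x_3$ replaces $E_{a,b,c}$ by the full solution set, a $4$-dimensional variety in affine six-space that heuristically always carries rational points; for instance, fixing $x_2=x_3$ and writing $x_1=\rho x_2$ exhibits a threefold inside it, fibred over the $\rho$-line by the genus-one curves $y_1^2=a+\rho^m(y_2^2-b)$, $y_2^2-y_3^2=b-c$. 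No construction uniform in $a,b,c$ is known, and local-global obstructions on these genus-one fibres cannot be excluded a priori. Producing such a point---equivalently, a rational section of the associated family over $\Q$---for all $a,b,c$ is exactly the step we cannot presently carry out, which is why the statement is formulated only as a conjecture.
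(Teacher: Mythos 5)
This statement is Conjecture \ref{conj3}: the paper gives no proof of it and explicitly records that the authors were unable to obtain a general result, so there is no argument of the paper to compare yours against. Your write-up honestly concedes at the end that the base step (one non-degenerate solution for every triple $a,b,c$) cannot be carried out, so it is not a proof; but the part you do claim, the ``amplification'' of one solution into infinitely many $d$, is itself fallacious, and it is worth seeing why.

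The scaling $d(\nu)=d_0\nu^{-m}$ changes $d$ exactly by the $m$-th power $\nu^{-m}$, so the twist by $d(\nu)$ is $\Q(\nu)$-isomorphic to the twist by $d_0$ via $x\mapsto\nu x$ (and, after specializing $\nu=\nu_0\in\Q^{*}$, $\Q$-isomorphic): all your ``infinitely many'' values of $d$ define one and the same curve up to isomorphism, hence one Jacobian. In particular the family you feed into \cite[Proposition 2.1]{JeToUl} is isotrivial: since $m$ is odd, $\nu^{-m(m-1)}=\bigl(\nu^{-(m-1)/2}\bigr)^{2m}$, and the substitution $X\mapsto\nu^{-(m-1)}X$, $Y\mapsto\nu^{-m(m-1)/2}Y$ identifies $Y^2=X^m+a_id_0^{m-1}\nu^{-m(m-1)}$ with the constant curve $Y^2=X^m+a_id_0^{m-1}$, carrying your ``non-constant'' point to the constant point $\bigl(d_0x_i^0,\,d_0^{(m-1)/2}y_i^0\bigr)$. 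Proposition 2.1 cannot apply to such $h$: as loosely quoted it would be contradicted by the non-constant point $(2\nu^2,3\nu^3)$ on $y^2=x^3+\nu^6$, whose divisor has order $6$ (the image of the $6$-torsion point $(2,3)$ on $y^2=x^3+1$); its hypotheses must exclude $h$ equal to a constant times a $2m$-th power, which is precisely the shape of your $h$. Consequently Silverman specialization returns only the situation you started from: you never establish that the initial divisors have infinite order (a single affine point does not imply positive rank), and you produce no essentially new twists. Under the literal reading in which $d$ and $d\nu_0^{m}$ count as different elements of the set, the conjecture would be trivially equivalent to the existence of a single suitable $d$ --- which both trivializes the statement (the paper's theorems all produce genuinely varying twist parameters) and is in any case exactly the existence problem your quartic $E_{a,b,c}$ analysis shows you cannot solve uniformly in $(a,b,c)$. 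So the gap is twofold: the admitted missing base case, and an amplification step that, once the isotriviality is noticed, proves nothing at all; the statement remains open, exactly as the paper leaves it.
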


In the light of Lemma \ref{T5} and Theorem \ref{ptwists} it is natural to state the following.

\begin{ques}\label{ques4}
For which positive integer $m$ and a prime $p$ the system of equations
\begin{equation*}
\frac{x_{1}^{m}(x_{1}+a_{1})}{y_{1}^{p}}=\frac{x_{2}^{m}(x_{2}+a_{2})}{y_{2}^{p}}=\frac{x_{3}^{m}(x_{3}+a_{3})}{y_{3}^{p}}
\end{equation*}
has a rational solution $x_{i}, y_{i}$ satisfying the condition $x_{i}y_{i}(x_{i}-a_{i})\neq 0$ for $i=1,2,3$ and all $a_{1}, a_{2}, a_{3}\in\Z\setminus\{0\}$?
\end{ques}

Similarly, in the light of Propositions \ref{p2} and \ref{p3} it is natural to state the following.

\begin{ques}
\label{ques5}
Assume that $p>5$ is an odd prime, $a$ is an even (and without loss of
generality $p$-th power-free) integer, and $k\in\{2,...,p-3\}\setminus\{\frac{p-1}{2}\}$. For which such $p, a$ and $k$ the groups
$J^{a,1,k}(\Q)  $ have a point of order $2$?
\end{ques}

Note that for such $a$ and $k$ the curves $C^{a,1,k}$ are not hyperelliptic and have bad reduction at $2$. Hence the methods from the proofs of
Propositions \ref{p2} and \ref{p3} fail. Such curves with the smallest genus (equals $3$) are $C^{a,1,2}:y^{7}=x(a-x)^{2}$
and $C^{a,1,4}:y^{7}=x(a-x)^{4}$. Numerical computations suggest the following.

\begin{conj}
Assume that $p,a$ and $k$ are such as in Question \ref{ques5}. If $l$ is a prime number such that $l\equiv1\pmod{p}, l\nmid a$
and $a$ is not a $p$-th power in $\mathbb{F}_{l}^{\ast}$ then the group $J^{a,1,k}(\mathbb{F}_{l})$ has odd order. Consequently, $J^{a,1,k}(\Q)$ contains no point of order $2$.
\end{conj}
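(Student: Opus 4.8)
The statement has two parts: the oddness of $\#J^{a,1,k}(\F_l)$ and the consequence that $J^{a,1,k}(\Q)$ has no point of order $2$. The second part is the easy one and I would dispose of it first. Since $l\equiv 1\pmod p$ forces $l\neq p$, and $l\nmid a$ by hypothesis, the curve $C^{a,1,k}$ has good reduction at $l$, so the reduction map induces an embedding $J^{a,1,k}(\Q)_{\mathrm{tors}}\hookrightarrow J^{a,1,k}(\F_l)$ on prime-to-$l$ torsion exactly as in the proof of Proposition \ref{L5}; as $l$ is odd this applies in particular to $2$-torsion. Hence if $\#J^{a,1,k}(\F_l)$ is odd, then $J^{a,1,k}(\F_l)$, and a fortiori $J^{a,1,k}(\Q)$, has no point of order $2$. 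The whole problem therefore reduces to proving that $\#J^{a,1,k}(\F_l)$ is odd.

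For the oddness I would start from the explicit zeta function computed in the lemma above. Because $l\equiv 1\pmod p$, the prime $l$ splits completely in $K=\Q(\zeta_p)$, so the residue degree is $f=1$ and there are $p-1$ primes $\mathfrak{l}\mid l$. The numerator $P_l$ of the zeta function is then a product of $p-1$ linear factors and, evaluating at $T=1$ as in the proof of Lemma \ref{wzor},
\begin{equation*}
\#J^{a,1,k}(\F_l)=P_l(1)=\prod_{\mathfrak{l}\mid l}\bigl(1-\chi_{\mathfrak{l}}^{\,1+k}(a)\,\tau_{1,k}(\mathfrak{l})\bigr).
\end{equation*}
This quantity is a rational integer. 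Fix a prime $\mathfrak{q}$ of $\Z[\zeta_p]$ above $2$; since $2$ is unramified in $K$ and the product lies in $\Z$, it is odd if and only if it is a unit at $\mathfrak{q}$, i.e. $\#J^{a,1,k}(\F_l)\not\equiv 0\pmod{\mathfrak{q}}$. As $\Z[\zeta_p]/\mathfrak{q}=\F_{2^{f_2}}$ is a field (with $f_2=\mathrm{ord}_p(2)$), the product is nonzero modulo $\mathfrak{q}$ precisely when no factor vanishes, that is,
\begin{equation*}
\chi_{\mathfrak{l}}^{\,1+k}(a)\,\tau_{1,k}(\mathfrak{l})\not\equiv 1 \pmod{\mathfrak{q}}\qquad\text{for every }\mathfrak{l}\mid l.
\end{equation*}

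Here is where the hypothesis on $a$ enters. Since $k\in\{2,\dots,p-3\}$ we have $1+k\in\{3,\dots,p-2\}$, so $\gcd(1+k,p)=1$; together with the assumption that $a$ is not a $p$-th power in $\F_l^{\ast}$ this forces $\chi_{\mathfrak{l}}^{1+k}(a)$ to be a \emph{nontrivial} $p$-th root of unity. Writing $\omega\in\F_{2^{f_2}}^{\ast}$ for the image of $\zeta_p$ and recalling $\tau_{1,k}(\mathfrak{l})=-J(\chi_{\mathfrak{l}},\chi_{\mathfrak{l}}^{k})$, the required inequality becomes, in characteristic $2$,
\begin{equation*}
\overline{J(\chi_{\mathfrak{l}},\chi_{\mathfrak{l}}^{k})}\neq \overline{\chi_{\mathfrak{l}}^{\,1+k}(a)}^{\,-1},
\end{equation*}
and the right-hand side is a nontrivial power of $\omega$. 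Thus it would suffice to prove that the reduction $\overline{J(\chi_{\mathfrak{l}},\chi_{\mathfrak{l}}^{k})}\pmod{\mathfrak{q}}$ is never a nontrivial $p$-th root of unity; this sufficient condition is independent of $a$ and would settle all admissible $a$ at once. Expanding $\overline{J(\chi_{\mathfrak{l}},\chi_{\mathfrak{l}}^{k})}=\sum_{j=0}^{p-1}\bar n_j\,\omega^{j}$, where $n_j=\#\{x\in\F_l\setminus\{0,1\}:\ \mathrm{ind}(x)+k\,\mathrm{ind}(1-x)\equiv j \ (\mathrm{mod}\ p)\}$ and $\mathrm{ind}$ is the index with respect to $\chi_{\mathfrak{l}}$ (so $\chi_{\mathfrak{l}}(x)=\zeta_p^{\mathrm{ind}(x)}$), the problem is reduced to a purely combinatorial parity statement about the numbers $n_j$ modulo $2$.

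The parity analysis of these index counts is the genuine obstacle, and it is the reason the statement is only a conjecture. One natural attack is to pass to Gauss sums via $J(\chi,\chi^{k})=g(\chi)g(\chi^{k})/g(\chi^{1+k})$ and invoke the Stickelberger congruence (or the Gross--Koblitz formula) to locate $J(\chi,\chi^{k})$ inside $\Z[\zeta_p]$. The difficulty is that Stickelberger's relation controls the factorization of $J$ at primes \emph{above} $l$, whereas what is needed is its reduction modulo a prime above $2$, and bridging these two local pieces of information uniformly in $l$ is exactly the step I do not see how to carry out; the available symmetries (for instance the $S_3$-action on $\{0,1,\infty\}$, which relates $J(\chi,\chi^k)$ to $J(\chi^k,\chi)$, and the elementary congruence $\sum_j \bar n_j=1$ coming from $l$ odd) reduce but do not eliminate the freedom in the $\bar n_j$. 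Short of such a congruence I would expect the conjecture to be confirmable only under extra congruence conditions on $l$, and in the smallest non-hyperelliptic instances ($p=7$, $k\in\{2,4\}$, genus $3$) one could hope to verify the parity statement directly and thereby obtain those cases unconditionally from the reductions above.
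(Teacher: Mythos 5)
The statement you set out to prove is posed in the paper as a \emph{conjecture}: the authors offer no proof, only the remark that numerical computations suggest it, so there is no argument of theirs to compare yours against. Judged on its own terms, your attempt is a correct reduction but, as you yourself concede, not a proof, and the missing step is not a technicality --- it is the entire content of the conjecture. To record what is sound: the deduction of the second assertion from the first is fine ($l\equiv 1\pmod p$ forces $l\neq p$, and $l\nmid a$, so $C^{a,1,k}$ has good reduction at $l$ and prime-to-$l$ torsion injects under reduction, which covers $2$-torsion since $l$ is odd); the identity $\#J^{a,1,k}(\F_l)=\prod_{\mathfrak{l}\mid l}\bigl(1-\chi_{\mathfrak{l}}^{1+k}(a)\,\tau_{1,k}(\mathfrak{l})\bigr)$ is correct because $l$ splits completely in $\Q(\zeta_p)$, so $f=1$ in the zeta-function lemma; testing oddness of this rational integer at a single prime $\mathfrak{q}\mid 2$ of $\Z[\zeta_p]$ is legitimate since $\mathfrak{q}\cap\Z=2\Z$ and the residue ring is a field; and $\chi_{\mathfrak{l}}^{1+k}(a)$ is indeed a nontrivial $p$-th root of unity (as $\gcd(1+k,p)=1$ and $a$ is not a $p$-th power in $\F_l^{\ast}$) whose image modulo $\mathfrak{q}$ is still nontrivial, because $\zeta_p$ has exact order $p$ in $(\Z[\zeta_p]/\mathfrak{q})^{\ast}$.

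The genuine gap is the final claim that the reduction of $J(\chi_{\mathfrak{l}},\chi_{\mathfrak{l}}^{k})$ modulo $\mathfrak{q}$ avoids the nontrivial powers of $\omega$ --- equivalently, your parity statement about the counts $n_j$ --- for which you offer no argument; and your diagnosis of why the natural tools fail is accurate: Stickelberger and Gross--Koblitz control the factorization of the Jacobi sum at primes above $l$, whereas what is needed is its residue at a prime above $2$, and no uniform-in-$l$ bridge is available. This is exactly why the paper can prove Proposition \ref{p2} for odd $a$ (there the $2$-adic input comes from ordinariness of the reduction at $2$ via \cite{GR}, using good reduction at $2$) but must leave the even-$a$ case as a conjecture: as noted after Question \ref{ques5}, for even $a$ the curves $C^{a,1,k}$ are not hyperelliptic and have bad reduction at $2$, so both that route and the hyperelliptic route of Proposition \ref{p3} are closed. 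Two smaller remarks: for the ``consequently'' clause to be nonvacuous for a given $a$ you should note that admissible primes $l$ exist, which follows from Chebotarev applied to $\Q(\zeta_p,\sqrt[p]{a})/\Q$ (here $[\Q(\zeta_p,\sqrt[p]{a}):\Q(\zeta_p)]=p$ since $p\nmid p-1$); and your sufficient condition is essentially forced if one wants the conjecture for all admissible $a$ at a fixed $l$, since $\chi_{\mathfrak{l}}^{1+k}(a)$ sweeps out all nontrivial $p$-th roots of unity as $a$ varies over admissible residues. In summary: correct framework, correctly identified obstruction, but no proof --- which matches the paper, where none is given.
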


\begin{acknow}
{\rm The authors thank the referee for a careful reading of the paper, and for suggesting numerous improvements. We would like to thank Andrzej D\k{a}browski for helpful conversations.}
\end{acknow}

\bigskip

\noindent University of Szczecin, Faculty of Mathematics and Physic, Institute of Mathematics, Wielkopolska 15, 70-451 Szczecin, Poland;
email: {\tt tjedrzejak@gmail.com}
\bigskip

\noindent Jagiellonian University, Faculty of Mathematics and Computer Science, Institute of Mathematics, {\L}ojasiewicza 6, 30 - 348 Krak\'{o}w, Poland;
 email: {\tt maciej.ulas@uj.edu.pl}


\begin{thebibliography}{100}

\bibitem{C} R. Coleman, {\it Torsion points on Fermat curves}, Compos. Math. {\bf58} (1986), 191--208.

\bibitem{DH} H. Davenport, H. Hasse, {\it Die Nullstellen der Kongruenzzetafunktionen in gewissen zyklischen Fallen},
J. reine angew. Math. {\bf172} (1934), 151--182.

\bibitem{DJ} A. D\k{a}browski, T. J\k{e}drzejak, {\it Ranks in families of Jacobian varieties of twisted Fermat curves},
Canad. Math. Bull. {\bf53} (2010), 58--63.

\bibitem{De} P. Deligne, {\it Varietes Abeliennes Ordinaires sur un Corps Fini}, Inven. Math {\bf8} (1969), 238--243.

 \bibitem{Fa} D. K. Faddeev, {\it Invariants of Divisor Classes for the Curves $x^{k}( 1-x) =y^{l}$ in l-adic Cyclotomic Field},
 Trudy Mat. Inst. Steklov {\bf64} (1961), 284--293.

\bibitem{GR} B. H. Gross, D. E. Rohlich, {\it Some results on the Mordell-Weil group of the Jacobian of the Fermat curve},
Invent. Math. {\bf44} (1978), 201--224.

\bibitem{HS} M. Hindry, J. H. Silverman, {\it Diophantine Geometry},
Graduate Texts in Mathematics {\bf 201}, Springer-Verlag 2000.

\bibitem{J} T. J\k{e}drzejak, {\it Characterization of the torsion of the Jacobians of two families of hyperelliptic curves}, Acta Arith., to appear.


\bibitem{JeToUl} T. J\k{e}drzejak, J. Top, M. Ulas, {\it Tuples of hypereeliptic curves $y^2=x^n+a$}, Acta Arith. {\bf 150} (2) (2011), 105--113.

\bibitem{JU} T. J\k{e}drzejak, M. Ulas, {\it Characterization of the torsion
of the Jacobian of $y^{2}=x^{5}+Ax$ and some applications}, Acta Arith.
{\bf 144} (2) (2010), 183--191.

\bibitem{MT} S. J. Meagher, J. Top, {\it Twists of genus three curves over finite fields}, Finite Fields Appl. 16 (2010), 347–-368.

\bibitem{Tz} P. Tzermias, {\it Cyclotomic torsion on Fermat Jacobians}, Arch. Math {\bf95} (2010), 19--24.

\bibitem{Ul1} M. Ulas, {\it A note on higher twists of elliptic curves}, Glasgow
Math. J. {\bf 52} (2010), 371--381.

\bibitem{Ul2} M. Ulas, {\it Variations on higher twists of pairs of elliptic
curves}, Int. J. Number Theory {\bf 6} (5) (2010), 1169--1182.

\bibitem{Ul3} M. Ulas, {\it Variations on twists on elliptic curves}, Rocky Mountain J. Math, {\bf 43} (2) (2013), 645--660.

\end{thebibliography}
 \end{document}